\newtheorem{theor}{Theorem }
\newtheorem{prop}{Proposition}[section]
\newtheorem{theo}[prop]{Theorem}
\newtheorem{lem}[prop]{Lemma}
 \newtheorem{coro}[prop]{Corollary}
\newcommand{\cM}{{\mathcal{M}}}
\newcommand{\un}{\mathbf{1}}
\newcommand\TT{\mathbb{T}}
\newcommand\DD{\mathbb{D}}
\newcommand\RR{\mathbb{R}}
\DeclareMathOperator{\supp}{supp}
\DeclareMathOperator{\Span}{span}
\newcommand{\cZ}{{\mathcal{Z}}}
\newcommand{\cJ}{{\mathcal{J}}}
\newcommand{\cI}{{\mathcal{I}}}
\newcommand{\cD}{{\mathcal{D}}}
\newcommand{\cR}{{\mathcal{R}}}
\newcommand{\cH}{{\mathcal{H}}}
\DeclareMathOperator{\dist}{dist}
\title[ Kernel and capacity in Dirichlet type spaces]
{Kernel estimate and  capacity in Dirichlet type spaces}
\author[O. El-Fallah,  Y. ElMadani, K. Kellay]
{O. El-Fallah,  Y. ElMadani, K. Kellay}
\subjclass[2000]{47B38, 30H05, 30C85}
\keywords{Dirichlet--type space, reproducing  kernel, capacity}
\address{O.El-Fallah \& Y. ElMadani, Laboratoire Analyse et Applications URAC/03\\ Universit\'e Mohamed V Agdal-Rabat- \\  B.P. 1014 Rabat\\Morocco}
\email{elfallah@fsr.ac.ma}
\email{madani@fsr.ac.ma}
\address{K. Kellay,  IMB\\Universit\'e de  Bordeaux \\
351 cours de la Lib\'eration\\33405 Talence \\France}
\email{kkellay@math.u-bordeaux1.fr}
\thanks{Research partially supported by "Hassan II Academy of Science and Technology" for the first and the second authors. }
\begin{document}
\maketitle
\begin{abstract}
Let $\mu$ be a positive finite measure on the unit circle. The Dirichlet type space $\cD (\mu)$, associated to $\mu$, consists of holomorphic functions on the unit disc whose derivatives are square integrable when weighted against the Poisson integral of $\mu$. First,  we give an estimate of the norm of the reproducing kernel $k^\mu$ of $\cD (\mu)$. Next, we study the notion of $\mu$-capacity associated to $\cD (\mu)$, in the sense of Beurling--Deny.  Namely, we give an estimate of  $\mu$-capacity of arcs in terms of  the norm of $k^\mu$. We also provide a new condition on closed sets to be $\mu$-polar. Note that in the particular case where $\mu$ is the Lebesgue measure,  this condition coincides with  Carleson's condition \cite{Ca}. Our method is based on  sharp estimates of norms of some outer test functions which allow us to transfer these problems to an estimate of the reproducing kernel of an appropriate weighted Sobolev space.

\end{abstract}

\section{Introduction}

Let $H^2$  denote the classical {\it Hardy space} of analytic functions on the unit disc $\DD$ having square summable Taylor coefficients at the origin. Every function $f\in  H^2$ has non-tangential limits almost everywhere on the unit circle $\TT=\partial \DD$. We denote by $f(\zeta)$ the non-tangential limit of $f$ at $\zeta\in \TT$ if it exists. \\
Let  $\mu$  be a positive  finite  measure on $\TT$,  the {\it Dirichlet type   space} $  \cD(\mu)$  is the set of analytic  functions $f\in H^2$, such that
 $$\cD_{\mu}(f):=\frac{1}{2\pi}\int_{\TT}\int_{\TT}\frac{|f(\zeta)-f(\xi)|^2}{|\zeta-\xi|^{2}}|d\zeta|d\mu(\xi)<\infty,$$
 The space $\mathcal{D}(\mu)$ is endowed with the  norm 
$$\|f\|_{\mu}^{2}:=\|f\|^{2}_{H^2}+\cD_{\mu}(f).$$
If  $d\mu(e^{it}) = 0$, then $\cD( \mu)=H^2$ and if $d\mu(e^{it})=dm(t)=d t/2\pi$, the normalized arc measure on $ \TT$, then $  \cD( \mu)$ is the classical {\it Dirichlet space} $\cD$.\\
These spaces were introduced by Richter  by examining the {\it $2$-isometries} on the Hilbert spaces. 
A bounded operator $T$ in a Hilbert space $\cH$ is called {\it $2$-isometry} if ${T^*}^{2}T^2-2T^*T-I=0$, is said to be {\it cyclic} if there exists $x\in \cH$ such that $\Span\{T^n x,n \geq 0\}$  is dense in $\cH$ and is  called {\it analytic} if $\bigcap_{n\geq0}T^n\cH=\{0\}$.  In \cite{Ri}, Richter proved that every cyclic, analytic  $2$--isometry can be represented as a multiplication by $z$ on a Dirichlet type space    $\cD( \mu)$ for some positive    finite  measure $\mu$. 
As consequence \cite{Ri, Ri1} Richter gave  an analogue of Beurling's theorem for the Dirichlet space.

\subsection{Reproducing kernels} The {\it reproducing kernel} plays an important role in the study of Hilbert
spaces of analytic functions. In particular, it allows to determine the rate of growth of functions
near the boundary and its tangential behavior; their properties are closely related to embedding theorems, sampling and interpolation sets, and other topics.  Let 
 $P[\mu]$ be the Poisson integral of the positive  finite measure $\mu$ on $\TT$
$$
P[\mu](z)=\int_\TT\frac{1-|z|^2}{|\zeta-z|^2}d\mu(\zeta),\qquad z\in \DD.
$$

In the following theorem, we provide an asymptotic estimate 
of the reproducing kernel   $k^\mu$ of $  \cD(\mu)$ on the diagonal. 

\begin{theor}\label{thkernel} Let $\mu$ be a finite positive measure on $\TT$. We have
  $$  k^\mu(z,z)\asymp 1+ \int_{0}^{|z|} \frac{dr}{(1-r)P[\mu](rz/|z|)+(1-r)^2},$$
 where the implied constants are absolute.
  \end{theor}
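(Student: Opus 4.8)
By rotation invariance I may assume $z=\rho\zeta_0$ with $\zeta_0\in\TT$ and $0\le\rho<1$, and I use the standard identity $k^\mu(z,z)=\sup\{|f(z)|^2/\|f\|_\mu^2:\ f\in\cD(\mu),\ f\not\equiv0\}$. The first step is to replace the seminorm $\cD_\mu$ by an area integral: by the well-known identity $\cD_\mu(f)=\frac1\pi\int_\DD|f'(w)|^2P[\mu](w)\,dA(w)$ (Richter--Sundberg), combined with the Littlewood--Paley equivalence $\|f\|_{H^2}^2\asymp|f(0)|^2+\int_\DD|f'(w)|^2(1-|w|)\,dA(w)$, one gets
$$\|f\|_\mu^2\ \asymp\ |f(0)|^2+\int_\DD|f'(w)|^2\,W(w)\,dA(w),\qquad W(w):=P[\mu](w)+1-|w|.$$
Put $\omega(r):=(1-r)W(r\zeta_0)=(1-r)P[\mu](r\zeta_0)+(1-r)^2$ and $I:=\int_0^\rho dr/\omega(r)$; the claim is then $k^\mu(z,z)\asymp1+I$. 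It is useful to note at this stage that, by an elementary estimate of the Poisson kernel, $(1-r)P[\mu](r\zeta_0)$ is comparable, with absolute constants, to a nonincreasing function of $r$; in particular so is $\omega$, a fact that will make the test functions below manageable.

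\emph{Upper bound.} Writing $f(z)=f(0)+\zeta_0\int_0^\rho f'(r\zeta_0)\,dr$, it suffices to bound $\int_0^\rho|f'(r\zeta_0)|\,dr$. Applying the sub-mean value inequality to the analytic function $f'$ on the ball $B_r:=B(r\zeta_0,(1-r)/4)$, and Harnack's inequality to the positive harmonic function $P[\mu]$ --- which gives $W(w)\asymp W(r\zeta_0)$ and $1-|w|\asymp1-r$ throughout $B_r$ --- one obtains
$$|f'(r\zeta_0)|^2\ \lesssim\ \frac1{(1-r)^2W(r\zeta_0)}\int_{B_r}|f'|^2W\,dA\ =\ \frac1{(1-r)\,\omega(r)}\int_{B_r}|f'|^2W\,dA.$$
Multiplying by $\omega(r)$, integrating in $r$, exchanging the order of integration, and using that for each fixed $w\in\DD$ the set $\{r\in[0,\rho]:w\in B_r\}$ has length $\lesssim1-|w|$ and is contained in $\{r:1-r\asymp1-|w|\}$, we get $\int_0^\rho|f'(r\zeta_0)|^2\omega(r)\,dr\lesssim\int_\DD|f'|^2W\,dA\lesssim\|f\|_\mu^2$. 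A Cauchy--Schwarz in the variable $r$ against the weight $\omega$ then yields $\big(\int_0^\rho|f'(r\zeta_0)|\,dr\big)^2\le I\cdot\int_0^\rho|f'(r\zeta_0)|^2\omega(r)\,dr\lesssim I\|f\|_\mu^2$, and since $|f(0)|\le\|f\|_\mu$ this gives $k^\mu(z,z)\lesssim1+I$.

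\emph{Lower bound.} The constant function gives $k^\mu(z,z)\ge1$, so we may assume $I\ge1$ and must construct $f\in\cD(\mu)$ with $|f(z)|^2\gtrsim I\|f\|_\mu^2$; this is where the sharp estimates of norms of outer test functions promised in the abstract enter, and it is the main obstacle. The strategy is to run the previous argument backwards: I look for $f$ with $f(0)=0$ whose derivative $g=f'$ is analytic, has $g(r\zeta_0)>0$ with $g(r\zeta_0)\asymp1/\omega(r)$ for $0\le r\le\rho$ (so there is no cancellation and $f(z)=\zeta_0\int_0^\rho g(r\zeta_0)\,dr\asymp I$), and is spread over the Stolz region $\Gamma=\{w:|1-\bar\zeta_0w|\le2(1-|w|)\}$ with $|g(w)|\asymp1/\big((1-|w|)W(w)\big)$ on $\Gamma$ and rapid decay off $\Gamma$; then Harnack on $\Gamma$ gives $\int_\DD|g|^2W\,dA\asymp\int_0^\rho\frac{dr}{(1-r)W(r\zeta_0)}=I$, so $\|f\|_\mu^2\asymp I$ and $|f(z)|^2/\|f\|_\mu^2\asymp I$. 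Concretely I would take $g$ to be a normalization of $\big((1-(1-\delta)\bar\zeta_0w)\,\Phi(w)\big)^{-1}$, with $\delta=1-\rho$ (the factor $1-(1-\delta)\bar\zeta_0w$ providing the cut-off at scale $\delta$) and $\Phi$ an outer function, built from the Herglotz transform of $\mu$, for which $\re\Phi\asymp|\Phi|\asymp P[\mu]+1-|w|$ along the radius $[0,z]$. The delicate point --- and what I expect to be the crux of the whole theorem --- is to produce such a $\Phi$ with \emph{controlled argument}, since the harmonic conjugate of $P[\mu]$ is a priori unbounded; this is precisely the promised sharp norm estimate for the outer test functions, and the monotonicity noted above (which lets one cut $[0,\rho]$ into intervals on which $(1-r)P[\mu](r\zeta_0)$ varies by a bounded factor) is what I would use to make it work.

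Finally, as consistency checks: for $\mu=0$ one has $\omega(r)=(1-r)^2$, so $1+I\asymp(1-|z|)^{-1}$, the diagonal of the Szeg\H{o} kernel; and for $\mu=m$ one has $\omega(r)\asymp1-r$, so $1+I\asymp1+\log\tfrac1{1-|z|}$, the diagonal of the classical Dirichlet kernel.
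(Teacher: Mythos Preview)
Your upper bound is correct and in fact cleaner than the paper's: you average $|f'|^2$ over hyperbolic balls $B_r$ along the radius and invoke Harnack for $P[\mu]$, whereas the paper averages $f$ itself over horizontal segments in a triangle $\Delta_\rho$ and only applies subharmonicity at the very end. Both arguments reduce to the same Cauchy--Schwarz step $\bigl(\int_0^\rho|f'(r\zeta_0)|\,dr\bigr)^2\le I\int_0^\rho|f'(r\zeta_0)|^2\omega(r)\,dr$, so there is no essential difference.

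The lower bound, however, has a real gap. You propose $g=f'=\bigl((1-(1-\delta)\bar\zeta_0 w)\,\Phi(w)\bigr)^{-1}$ with $\Phi$ an outer function built from the Herglotz transform of $\mu$ so that $\re\Phi\asymp|\Phi|\asymp P[\mu]+1-|w|$ along the radius. But the Herglotz transform $H\mu$ satisfies $\re H\mu=P[\mu]$ while $\Im H\mu$ is the conjugate Poisson integral of $\mu$, which is in general completely unbounded relative to $P[\mu]$ --- even along a single radius. You acknowledge that controlling the argument of $\Phi$ is ``the delicate point'' and ``the crux of the whole theorem'', but you do not actually do it; the monotonicity of $\omega$ that you mention does not by itself give you an analytic $\Phi$ with bounded argument. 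Without this, $|g(r\zeta_0)|$ need not be $\asymp1/\omega(r)$, and both the computation of $f(z)$ and of $\int_\DD|g|^2W\,dA$ collapse.

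The paper sidesteps the argument-control problem entirely by working with $f$ rather than $f'$: it takes $f=f_w$ outer with prescribed boundary modulus $|f_w(e^{it})|=w(|t|)$ for a suitable convex decreasing $w$. The lower bound $|f_w(\rho)|\gtrsim w(1-\rho)$ comes from a harmonic-measure estimate (their Proposition~\ref{estimationharmo}), and the upper bound $\cD_\mu(f_w)\lesssim\|F_\mu w'\|_\infty\|w\|_\infty$ comes from the Richter--Sundberg local Dirichlet formula (their Theorem~\ref{coronorm}); both steps use only $|f_w|$ on $\TT$, so the conjugate function never enters. The final optimization over $w$ is then a Sobolev-space kernel computation (their Proposition~\ref{sobolev}). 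If you want to salvage your approach you would need an independent argument producing an analytic $\Phi$ with $|\Phi|\asymp P[\mu]+1-|w|$ on a Stolz angle; this is not obvious for general $\mu$, and the paper's route via boundary data and Richter--Sundberg is the natural way around it.
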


 Let us recall that Shimorin \cite{S1}  proved that  all Dirichlet type  spaces have complete Nevanlinna--Pick reproducing kernels.  As an important consequence (see \cite[\S 9.4]{AM} and \cite[Theorem 1]{Seip}), each  sequence $\cZ=\{z_n\}\subset \DD$ satisfying Shapiro--Shields condition $\sum_{z\in \cZ}1/k^\mu(z,z)<\infty$
 is a zero set of $\cD(\mu)$. Theorem  \ref{thkernel}  allows to give  examples of zero sets of $\cD (\mu )$.\\

To prove the lower estimates of $ k^\mu(z,z)$, we establish a sharp norm estimates of some outer functions which peak near $z$. This allows us to transfer our problem to an estimation of the norm of the kernel of an appropriate weighted Sobolev space. In fact and roughly speaking, Theorem \ref{thkernel} says that $ k^\mu(z,z)\asymp K_{\varphi}(1-|z|,1-|z|)$ where $K_{\varphi}$ is the reproducing kernel of the weighted Sobolev space defined by
$$
{W^2(\varphi):= \big\{ f\in C \big ( (0, 2\pi] \big )\text{ : }  f(x)= f(1)+\displaystyle \int _{x}^{2\pi} g(t)dt , \
 g \in L^2((0,2\pi), \varphi dt) \big\}},
$$
where $\varphi (t) = tP[\mu]((1-t)z/|z|)+t^2$.

\subsection{Capacity} The Dirichlet type  space is closely related to  some notions of potential theory.
Let $\cD^h(\mu)$ be the harmonic version of $  \cD(\mu)$ given by
$$\cD^h(\mu):=\left\{f\in L^2(\TT) \text{ : } \|f\|_{\mu}^{2}:=\|f\|_{L^{2}(\TT)}^{2}+\cD_\mu(f)<\infty\right\}.$$
 $\cD^h(\mu)$ is a Dirichlet space in the sense of Beurling--Deny \cite{BD}. Some aspects of the potential theory associated to the general Dirichlet spaces were studied in several papers (see for instance \cite{FOT}).  In this paper we will focus on the  notion of capacity. We recall at first, the definition of capacity in the sense of Beurling--Deny.  Let $U$ be a open subset of the unit circle. The $c_{\mu}$-capacity of $U$ is defined by
\begin{equation}
\label{cmucap}
c_\mu (U):=\inf\left\{\|u\|_\mu^2 \text{ : } u\in \cD^h (\mu),  \; \; u\geq 0 \text{ and } u \geq 1 \text{ a.e. on } U \right\}.
\end{equation}
 As usual we define the $c_{\mu}$-capacity of any subset $F \subset  \TT$  by 
 $$
 c_{\mu}(F)= \inf \{ c_{\mu}(U):  UÊ\; \text{ open,  }\;  F\subset U \}.
 $$
Since the $L^2$ norm dominates the Dirichlet--type norm, it is completely obvious that sets having 
$c_\mu$--capacity $0$ have Lebesgue measure $0$. We say that a  property  holds $c_\mu $-quasi-everywhere ($c_\mu$--q.e.)  if it holds everywhere outside a set of $c_\mu$--capacity $0$.  So,  $c_\mu$--q.e. implies a.e.. A closed set of capacity zero will be called, throughout this paper, {\it $\mu$--polar} set. If  $d\mu(e^{it}) = d t/2\pi$, the normalized arc measure on $ \TT$, then $  \cD(\mu)$ is the classical Dirichlet space $\cD$ and $c_\mu $ is comparable to the logarithmic capacity,  see \cite[Theorem 14]{M} and \cite[Theorem 2.5.5]{AH}.\\
 Our first result on $\mu$-capacity gives an estimate of capacity of arcs in terms of the kernel.  More precisely we have:
 \begin{theor}\label{cap arc} Let $I\subset \TT$ be the arc  of length $|I|=1-\rho$ with the midpoint at $\zeta\in \TT$. Then
$$c_\mu(I)\asymp \frac{1}{k^{\mu}(\rho\zeta ,\rho\zeta )}.$$
where the implied constants are absolute. 
\end{theor}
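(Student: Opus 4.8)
The plan is to prove the two matching inequalities separately. Put $h:=|I|=1-\rho$ and, after a rotation, assume $\zeta=1$; write $\hat u$ for the Poisson extension to $\DD$ of a function $u\in L^2(\TT)$. Two facts will be used throughout. First, for $a,b\in H^2$ one has $\cD_\mu(a+\bar b)=\cD_\mu(a)+\cD_\mu(b)$: the cross term $\frac1{2\pi}\iint\frac{(a(\zeta_1)-a(\zeta_2))(b(\zeta_1)-b(\zeta_2))}{|\zeta_1-\zeta_2|^2}|d\zeta_1|\,d\mu(\zeta_2)$ vanishes, as one sees on monomials using $|\zeta_1-\zeta_2|^2=-(\zeta_1-\zeta_2)^2/(\zeta_1\zeta_2)$ for $\zeta_1,\zeta_2\in\TT$; in particular $\cD_\mu(\re g)=\tfrac12\cD_\mu(g)$ for $g\in\cD(\mu)$. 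Second, I will freely use the comparison $k^\mu(\rho\zeta,\rho\zeta)\asymp K_\varphi(h,h)$ supplied by Theorem~\ref{thkernel}, where $\varphi(t)=tP[\mu]((1-t)\zeta)+t^2$ and $K_\varphi(h,h)=1+\int_h^{2\pi}\frac{dt}{\varphi(t)}$.

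\emph{Upper bound.} Let $e(t)=K_\varphi(t,h)/K_\varphi(h,h)$ be the reproducing kernel of $W^2(\varphi)$ normalized at $h$; then $0<e\le1$, $e$ is non-increasing, $e\equiv1$ on $(0,h]$, and $\|e\|_{W^2(\varphi)}^2=1/K_\varphi(h,h)$. Let $F_0$ be the outer function with $|F_0(e^{it})|=e(|t|)$, $t\in(-\pi,\pi]$, so that $F_0\in H^\infty$ with $\|F_0\|_\infty\le1$ and $|F_0|\equiv1$ on the open arc $J$ of length $2h$ centered at $\zeta$ (hence $I\subset J$); the key point — this is the sharp outer test-function estimate already needed for the lower bound in Theorem~\ref{thkernel} — is that $\|F_0\|_\mu^2\asymp\|e\|_{W^2(\varphi)}^2=1/K_\varphi(h,h)\asymp1/k^\mu(\rho\zeta,\rho\zeta)$. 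Now set $u=|F_0|$. Since $\big||F_0(\zeta_1)|-|F_0(\zeta_2)|\big|\le|F_0(\zeta_1)-F_0(\zeta_2)|$ we have $\cD_\mu(u)\le\cD_\mu(F_0)$, and $\|u\|_{L^2(\TT)}=\|F_0\|_{H^2}$; thus $u\in\cD^h(\mu)$ with $\|u\|_\mu^2\le\|F_0\|_\mu^2$. As $u\ge0$ and $u\equiv1$ on $J$, the function $u$ is admissible for $c_\mu(J)$, so $c_\mu(I)\le c_\mu(J)\le\|u\|_\mu^2\lesssim1/k^\mu(\rho\zeta,\rho\zeta)$.

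\emph{Lower bound.} Let $U\supset I$ be open and $u\in\cD^h(\mu)$ with $u\ge0$ and $u\ge1$ a.e.\ on $U$. Let $g$ be the analytic completion of $\hat u$, i.e.\ $g(z)=\hat u(0)+2\sum_{n\ge1}\hat u(n)z^n$, so that $\re g=\hat u$ on $\DD$. Then $\|g\|_{H^2}^2=|\hat u(0)|^2+4\sum_{n\ge1}|\hat u(n)|^2\le4\|u\|_{L^2(\TT)}^2$, so $g\in H^2$; and on $\TT$ we have $u=\tfrac12g+\tfrac12\bar g$, whence $\cD_\mu(g)=2\cD_\mu(u)$ by the identity above. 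Therefore $g\in\cD(\mu)$ with $\|g\|_\mu^2\le4\|u\|_\mu^2$. On the other hand, $|g(\rho\zeta)|\ge\re g(\rho\zeta)=\hat u(\rho\zeta)=\tfrac1{2\pi}\int_\TT P_{\rho\zeta}(e^{it})u(e^{it})\,dt\ge\tfrac1{2\pi}\int_I P_{\rho\zeta}(e^{it})\,dt$, and since $1-|\rho\zeta|=|I|$ this last integral — the harmonic measure at $\rho\zeta$ of the arc $I$ — is bounded below by an absolute constant $c_0>0$ (an elementary Poisson-kernel estimate). Combining this with the reproducing-kernel inequality $|g(\rho\zeta)|^2\le k^\mu(\rho\zeta,\rho\zeta)\|g\|_\mu^2$ gives $c_0^2\le4\,k^\mu(\rho\zeta,\rho\zeta)\|u\|_\mu^2$; taking the infimum over admissible $u$ and over open $U\supset I$ yields $c_\mu(I)\ge c_0^2/\big(4\,k^\mu(\rho\zeta,\rho\zeta)\big)$.

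\emph{Where the work is.} Both halves are routine once one has the sharp estimate $\|F_0\|_\mu^2\asymp1/K_\varphi(h,h)$ for the peaking outer function $F_0$ — equivalently the two-sided comparison $k^\mu\asymp K_\varphi$ of Theorem~\ref{thkernel}, which is the genuine obstacle. That estimate is obtained by starting from the identity $\cD_\mu(F_0)=\tfrac1\pi\int_\DD|F_0(z)|^2\big|\nabla\log|F_0|(z)\big|^2\,P[\mu](z)\,dA(z)$ (checked on monomials), bounding the right-hand side by splitting $\DD$ according to whether $1-|z|$ is smaller or larger than the distance from $z/|z|$ to $\zeta$, and recognizing the resulting weighted one-dimensional integral as comparable to $\int|e'|^2\varphi$; the matching lower bound is what singles out $W^2(\varphi)$. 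For Theorem~\ref{cap arc} itself only the two easy reductions are new here: $F_0\rightsquigarrow|F_0|$ (a normal contraction, hence non-increasing for the Dirichlet form) on the upper side, and passage to the analytic completion together with the harmonic-measure lower bound at $\rho\zeta$ — the one place where the correspondence ``arc of length $1-\rho$ $\leftrightarrow$ point $\rho\zeta$'' enters — on the lower side.
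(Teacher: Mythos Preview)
Your argument is correct in outline and your lower bound is in fact cleaner than the paper's; a small technical slip in the upper bound is worth flagging.

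\textbf{Upper bound.} Your strategy --- build an outer function from a Sobolev extremal profile, pass to its modulus by normal contraction --- is exactly the paper's. The paper, however, does not work with the specific profile $e(t)=K_\varphi(t,h)/K_\varphi(h,h)$; instead it takes a generic $w\in\cR$, uses Theorem~\ref{coronorm} to get $\cD_\mu(f_w)\lesssim\|F_\mu w'\|_\infty\|w\|_\infty$, and then invokes Proposition~\ref{sobolev} ($\gamma_\varphi\asymp K_\varphi$) after optimizing over $w$. This matters because your $e$ does \emph{not} satisfy $e\equiv1$ on $(0,h]$: the $L^2$ part of the $W^2(\varphi)$ inner product forces $K_\varphi(\cdot,h)$ to solve $(k'\varphi)'=k$ on $(0,h)$, so it is not constant there (that property belongs to the $W_0^2$ kernel $L_\varphi$). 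Likewise $e$ need not lie in the class $\cR$ required by Theorem~\ref{coronorm} (it has a concave corner at $h$). The fix is painless --- use $L_\varphi(\cdot,h)/L_\varphi(h,h)$ together with a separate bound $\|e_L\|_2^2\lesssim 1/L_\varphi(h,h)$, or simply use the explicit regular function $f_0$ from Proposition~\ref{sobolev} --- but as written the claim ``$e\equiv1$ on $(0,h]$'' is false.

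\textbf{Lower bound.} Here you diverge from the paper, and to your advantage. The paper repeats, for $P[u]$, the path-integral/Cauchy--Schwarz computation that gave the upper estimate of $k^\mu$ in \eqref{eqkernel1}, obtaining $P[u](\rho)\lesssim \cD_\mu(u)^{1/2}k^\mu(\rho,\rho)^{1/2}+\|u\|_{L^2}$ and then splitting into cases according to the size of $\|u\|_{L^2}$. You instead pass to the analytic completion $g$ of $u$, use the orthogonality $\cD_\mu(a+\bar b)=\cD_\mu(a)+\cD_\mu(b)$ (correct; equivalently, $|\nabla\re g|^2=|g'|^2$ in the Douglas formula) to get $\|g\|_\mu^2\le4\|u\|_\mu^2$, and then apply the reproducing-kernel inequality $|g(\rho\zeta)|^2\le k^\mu(\rho\zeta,\rho\zeta)\|g\|_\mu^2$ together with the elementary harmonic-measure lower bound $\re g(\rho\zeta)=P[u](\rho\zeta)\ge c_0$. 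This is shorter, avoids re-running the proof of Theorem~\ref{thkernel}, and makes the dependence on $k^\mu$ transparent. What it buys is a one-line reduction from the harmonic to the holomorphic setting; what the paper's route buys is that it never needs the analytic completion and works directly with the admissible $u$.
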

As a consequence,  
\begin{equation}\label{cappoint}
c_\mu(\{e^{i\theta}\})=0\iff  \int_{0}^{1} \frac{dr}{(1-r)P[\mu](re^{i\theta}) +(1-r)^2}=\infty.
\end{equation}
In the sequel we will suppose that  $E$ is a closed set which has Lebesgue measure zero and $\mu$ is a finite positive measure on $\TT$. Now our goal is to  give sufficient condition on $E$ to be $\mu$--polar. Let us introduce the local modulus of continuity of $\mu$ on $E$ which will play a crucial role in this paper. It is defined by 
\begin{equation}\label{locmod}
\rho_{\mu,E} (t):= \sup \{\mu (\zeta e^{-it},\zeta e^{it})\text{ : } \zeta \in E\}.
\end{equation}
Note that $\rho_{\mu,\TT} =\rho_{\mu}$ is the classical modulus of continuity of $\mu$.
Let us write 
$$E_t:=\{\zeta\in \TT\text{Ê: } d(\zeta,E)\leq t\},$$ where $d$ denotes the distance with respect to arc-length, and denote by $|E_t|$  the Lebesgue measure of $E_t$.  
We can express the function $|E_t|$ in terms of 
$${N}_E(t):=2\sum_j\un_{\{|I_j|>2t\}},$$
where $(I_j)$ are the components of $\TT\backslash E$, as follows
$$\int_0^t {N}_E(s)ds=|E_t|.$$

 In Theorem \ref{capdirichlet}, we give a sufficient conditions on a closed subset $E$, in terms of $\rho_{\mu,E} $ and ${N}_E$, to be $\mu$-polar.
To illustrate this theorem we give here some of its corollaries.\\

\noindent{\bf(i)} If 
$$\displaystyle \int_0^\pi \frac{dt}{ \int_0^t ( \rho_\mu(s)N_E(s)/s)ds}=+\infty, $$
then $c_\mu(E)=0$.\\

This result can be considered as an extension of Carleson's Theorem \cite[section IV, Theorem 2]{Ca}. In fact if $\mu =m$ is the Lebesgue measure then $\rho_{\mu,E}(t)= t$ and  $c_\mu \asymp c$ ($c$ is the logarithmic capacity). We obtain  Carleson's theorem which says that if 
$\int _0^\pi{dt}/{|E_t|}=\infty,$ then $c(E)=0$.\\

\noindent{\bf(ii)} Suppose that $\rho_{\mu,E}(t)=O(t^\alpha)$ with  $1\leq \alpha <2$. If 
 $$\displaystyle \int_0^\pi \frac{dt}{t^{\alpha-1}|E_t|}=+\infty, $$
 then
  $c_\mu(E)=0$.\\

\noindent{\bf(iii)} If $ \rho_{\mu,E}(t)=O(t^\alpha)$ with  $\alpha>2$, we have  $c_\mu(E)=0$.\\

\noindent Note also that if $t^\alpha =O(\rho_{\mu,\{1\}}(t))$ with  $ \alpha <1$, then by \eqref{cappoint},   $c_{\mu} (\{1\})>0$.\\

The proof of Theorem  \ref{capdirichlet} uses an  idea analogous to the   proof of Theorem \ref{thkernel}. However, our test functions must peak on the whole set $E$ and the desired weighted Sobolev spaces will depend  on $\mu$ and $E$. In fact, we prove that
there is no bounded point evaluation at $0$  for $W^2(\varphi)$ (where $\varphi$ depends on $\rho _{\mu , E}$ and $N_E$), then $c_\mu(E) =0$. Note finally, that  there is no bounded point evaluation at $0$  for $W^2(\varphi)$  if and only if $\displaystyle \lim _{t\to 0^+}K_{\varphi}(t,t)= \infty$.\\

The plan of the paper is the following. In the next section we recall two formulas of the Dirichlet type  norm;  we also give punctual estimates of some outer functions.  In Section 3 we give norm estimates of our test functions. In Section 4 we give  diagonal asymptotic estimates of reproducing kernel.  In section 5, we prove the announced results on capacity.\\

Throughout the paper, we use the following notations:
\begin{itemize}
\item $A\lesssim B$ means that there is an absolute constant $C$ such that $A \leq CB$. 
\item $A\asymp B$  if both $A\lesssim B$ and $B\lesssim A$. 
\item $C(x_1,\ldots,x_n)$ denote a constant which depends only on variables $x_1, \ldots, x_n$. 
\end{itemize}

\section{Preliminaries}

\subsection{Norm formulas} 
In this subsection we recall some results  about norm formulas in Dirichlet type  spaces which will be used in what follows. \\

For a finite positive measure $\mu$ on $\TT$, the harmonic Dirichlet space $\cD^h(\mu)$ consists of functions $f\in L^2(\TT)$ such that 
$$\cD_{\mu}(f):=\int_{\TT}\cD_\xi(f)d\mu(\xi)<\infty,$$
 where $\cD_\xi(f)$ is the {\it local Dirichlet integral} of $f$ at $\xi\in \TT$ given by
 $$\cD_{\xi}(f):=\int_{\TT}\frac{|f(e^{it})-f(\xi)|^2}{|e^{it}-\xi|^{2}}\frac{dt}{2\pi}.$$

 The Douglas' formula, see \cite[Theorem 7.1.3]{EKMR}, expresses the Dirichlet integral of a function $f$ in terms of the Poisson transform of $\mu$, namely
$$D_\mu(f)=\int_\DD |\nabla P[f] |^2P[\mu] dA,\qquad f\in D^h(\mu),$$
where $dA(z)=dxdy/\pi$ stands for the normalized area measure in $\DD$. 
In particular, if $f\in \cD(\mu)( =\cD^h(\mu)\cap H^2)$,  Douglas' Formula becomes 
  $$\cD_{\mu}(f):=\int_\DD |f'(z)|^2 P[\mu](z) dA(z)<\infty. $$

Another useful formula, due to Richter and Sundberg  \cite[Theorem 3.1]{RS1}, gives the local Dirichlet integral of function $f$ in terms of their zeros sequence, their singular measure and the modulus of their radial limit. We will need,  throughout this paper, the Richter--Sundberg formula mainly for outer functions.  
Recall that outer functions are given by  
$$f(z)=\exp \int_\TT\frac{\zeta+z}{\zeta-z}\log \varphi(\zeta) \frac{|d\zeta|}{2\pi}, \qquad (z\in \DD),$$
where $\varphi$ is a positive function such that $\log \varphi\in L^1(\TT)$.  Note that the radial limit of $f$, noted also by $f$, exists a.e. and $|f|=\varphi$  a.e. on $\TT$.\\

Let $f\in H^2$ be an outer function such that $f(\zeta)$ exists at $\zeta\in \TT$. We have
\begin{equation}\label{formuleRS}
\cD_\zeta(f)=\int_\TT\frac{|f(\lambda)|^2-|f(\zeta)|^2-2|f(\zeta)|^2\log|f(\lambda)/f(\zeta)|}{|\lambda-\zeta|^2}\frac{|d\lambda|}{2\pi}.
\end{equation}

 \subsection{{Punctual estimates of test functions}} The result obtained in this subsection will be used in the proof of the lower estimate of the kernel.
\begin{lem}\label{meshar}Let $1/2<r=1-a<1$  and let  $I_k=[e^{ia_k},e^{ia_{k+1}}]$ with  $a_0=0$, $a_k=2^{k}a$ ($k\geq 1$). Let $N$ be the integer such that $2^{N}a\leq \pi < 2^{N+1}a$, then 
$$\sum_{k=0}^{N-1}(k+1)\varpi(r,I_k,\DD) \asymp 1,$$
where $\varpi(r,I_k,\DD)$ denotes the harmonic measure of $I_k$ at $r$ in $\DD$.
\end{lem}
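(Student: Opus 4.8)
The plan is to reduce everything to the elementary Poisson-kernel estimate at the real point $z=r$, followed by summing a rapidly convergent series. Set $a=1-r\in(0,1/2)$ and recall that harmonic measure at $r$ is given by
$$
\varpi(r,I,\DD)=\int_I P(r,e^{it})\,\frac{dt}{2\pi},\qquad P(r,e^{it})=\frac{1-r^2}{|e^{it}-r|^2}.
$$
The first step is to record the two-sided bound, with absolute constants,
$$
P(r,e^{it})\asymp \frac{a}{a^2+t^2},\qquad t\in[-\pi,\pi],
$$
which follows from $1-r^2=(1-r)(1+r)\asymp a$ (since $1/2<r<1$) together with $|e^{it}-r|^2=(1-r)^2+4r\sin^2(t/2)\asymp a^2+t^2$, using $2<4r<4$ and $\sin(t/2)\asymp t$ on $[0,\pi]$.

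Next I would estimate $\varpi(r,I_k,\DD)$ for each $k$. Here $I_k$ is the arc $\{e^{it}:a_k\le t\le a_{k+1}\}$ with $a_0=0$ and $a_k=2^k a$ for $k\ge1$, so all the $I_k$ with $0\le k\le N-1$ lie in $\{0\le t\le 2^N a\}\subset\{0\le t\le\pi\}$ and the kernel estimate applies. For $k=0$, the substitution $t=as$ gives
$$
\varpi(r,I_0,\DD)\asymp\int_0^{2a}\frac{a}{a^2+t^2}\,\frac{dt}{2\pi}=\frac{1}{2\pi}\int_0^{2}\frac{ds}{1+s^2}\asymp 1 .
$$
For $1\le k\le N-1$ the arc $I_k$ lies in $\{t\ge 2^k a\ge 2a\}$, so there $a^2+t^2\asymp t^2$, whence
$$
\varpi(r,I_k,\DD)\asymp\int_{2^k a}^{2^{k+1}a}\frac{a}{t^2}\,\frac{dt}{2\pi}=\frac{a}{2\pi}\Bigl(\frac{1}{2^k a}-\frac{1}{2^{k+1}a}\Bigr)\asymp 2^{-k}.
$$
Thus $\varpi(r,I_k,\DD)\asymp 2^{-k}$, uniformly over the admissible range of $k$ and $r$.

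It then remains to sum. We get
$$
\sum_{k=0}^{N-1}(k+1)\varpi(r,I_k,\DD)\asymp\sum_{k=0}^{N-1}(k+1)2^{-k},
$$
and the right-hand side is at most $\sum_{k\ge0}(k+1)2^{-k}=4$ and at least its $k=0$ term, namely $1$; moreover $a<1/2$ forces $2^{N+1}>2\pi$, i.e. $N\ge2$, so the sum is genuinely non-empty. This yields $\sum_{k=0}^{N-1}(k+1)\varpi(r,I_k,\DD)\asymp 1$ with absolute constants, as claimed. I do not expect a real obstacle here; the only points needing a little care are the uniformity of the Poisson-kernel comparison over all $r\in(1/2,1)$ and $t\in[0,\pi]$, and the treatment of the first arc $I_0$, which abuts the point $1$ closest to $r$ and must be checked to contribute a positive absolute constant rather than a decaying term.
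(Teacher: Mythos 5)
Your proof is correct, and it takes a genuinely different route from the paper. You prove the sharp local estimate $\varpi(r,I_k,\DD)\asymp 2^{-k}$ (uniformly in $k$ and $r$) directly from the Poisson-kernel comparison $P(r,e^{it})\asymp a/(a^2+t^2)$, and then conclude by summing the convergent series $\sum_{k\ge 0}(k+1)2^{-k}=4$, the lower bound coming from the $k=0$ term alone; all the points you flag (uniformity of the kernel bound for $r\in(1/2,1)$, $t\in[-\pi,\pi]$, the arc $I_0$ abutting $1$, and $N\ge 2$) check out. The paper argues differently: its lower bound is the trivial observation $\sum_k(k+1)\varpi(r,I_k,\DD)\ge\sum_k\varpi(r,I_k,\DD)\asymp 1$, and for the upper bound it evaluates the function $g(z)=\log(1/|1-rz|)$, harmonic in a neighbourhood of $\overline{\DD}$, at $z=r$ via its Poisson integral and uses $|1-re^{i\theta}|\asymp 2^{k}(1-r)$ on $I_k$ to obtain $\log\frac{1}{1-r^2}=\log\frac{1}{1-r}-\log 2\sum_k k\,\varpi(r,I_k,\DD)+O(1)$, which bounds the weighted sum without estimating any individual harmonic measure. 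Your approach is more elementary and yields strictly more information (the dyadic decay of the $\varpi(r,I_k,\DD)$, which could be reused elsewhere), while the paper's identity-based argument is shorter on the upper bound and sidesteps term-by-term computation; both give absolute constants, so either proof serves the lemma equally well.
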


\begin{proof}
Without loss of generality, we may suppose that $2^Na= \pi$. Note that 
$$
\sum_{k=0}^{N-1}(k+1)\varpi(r,I_k,\DD) \geq \sum_{k=0}^{N-1}\varpi(r,I_k,\DD) \asymp 1.
$$
For the reverse inequality, let $g(z)=\log1/|1-rz|$. Since  $g$ is harmonic in the   neighbourhood of $\overline{\DD}$, 
$$g(z)=\frac{1}{2\pi}\int_{-\pi}^{\pi}\frac{1-|z|^2}{|1-ze^{-i\theta}|^2}\log \frac{1}{|1-re^{i\theta}|}d\theta.$$
So,
\begin{equation}\label{har}
g(r)=\log\frac{1}{1-r^2}
=\sum_{k=0}^{N-1}\frac{1}{\pi}\int_{a_k}^{a_{k+1}} \frac{1-r^2}{|1-re^{-i\theta}|^2}\log \frac{1}{|1-re^{i\theta}|}d\theta.
\end{equation}
For $ k=0, \ldots,  N-1,$ and  $\theta\in (a_k,a_{k+1})$, we have 
$$\frac{1}{|1-re^{i\theta}|}\asymp \frac{1}{2^k}\frac{1}{1-r}.$$
By \eqref{har}, we get 
$$\log\frac{1}{1-r^2}= \log \frac{1}{1-r}-\log 2\sum_{k=0}^{N-1}k\varpi(r,I_k,\DD)+ O(1)$$
and our result follows.
\end{proof}

 Let  $w:(0,\pi)\to (0,+\infty)$ be a continuous  positive  function such that $\log w\in L^1(\TT)$.  As before  $f_w$ denote the outer function satisfying 
  \begin{equation}\label{outerfunction}
  |f_w(e^{it})|=w(|t|) \quad \text { a.e} \ \mbox{on}\  (-\pi,\pi).
  \end{equation}

\begin{prop}\label{estimationharmo}
Let $w:(-\pi,\pi)\to (0,+\infty)$ be an even continuous positive decreasing  function such that   $\log w\in L^1(\TT)$. Suppose that $w(x)\leq 2 w(2x)$.  Let  $f_w$ be an outer function given by 
\eqref{outerfunction}. Then 
$$w(1-r) \lesssim  |f_w(r)|, \qquad 0\leq r<1.$$
\end{prop}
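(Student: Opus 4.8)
The plan is to express $\log|f_w(r)|$ as a Poisson integral of $\log w$ and to bound it from below by replacing the Poisson kernel on dyadic rings around the point $1$ by suitable harmonic measures, so that Lemma \ref{meshar} can be applied. Concretely, set $a=1-r$ (and we may assume $r>1/2$, since for $r\le 1/2$ the estimate is trivial by continuity and positivity of $w$), and write
$$
\log|f_w(r)|=\frac{1}{2\pi}\int_{-\pi}^{\pi}\frac{1-r^2}{|1-re^{i\theta}|^2}\log w(|\theta|)\,d\theta
=\frac{1}{\pi}\int_{0}^{\pi}\frac{1-r^2}{|1-re^{i\theta}|^2}\log w(\theta)\,d\theta,
$$
using that $w$ is even. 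Then split the integral over the dyadic arcs $I_k=(a_k,a_{k+1})$ with $a_0=0$, $a_k=2^k a$ for $1\le k\le N$, where $2^N a\asymp \pi$, plus possibly a leftover piece near $\pi$ which contributes $O(1)$ (here $\log w$ is bounded on a neighbourhood of $\pi$ since $w$ is continuous and positive there, or one absorbs it into the $k=N-1$ term).

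On each arc $I_k$ with $k\ge 1$ the Poisson kernel $\dfrac{1-r^2}{|1-re^{i\theta}|^2}$ is comparable to $\dfrac{a}{(2^k a)^2}=\dfrac{1}{2^{2k}a}$, and the harmonic measure $\varpi(r,I_k,\DD)=\dfrac{1}{2\pi}\int_{I_k}\dfrac{1-r^2}{|1-re^{i\theta}|^2}\,d\theta$ is comparable to $\dfrac{2^k a}{2^{2k}a}=2^{-k}$; for $k=0$ both the kernel and $\varpi(r,I_0,\DD)$ are comparable to absolute constants. The monotonicity hypotheses on $w$ give, on $I_k$, that $w(\theta)\ge w(a_{k+1})=w(2^{k+1}a)$, and the doubling-type condition $w(x)\le 2w(2x)$ iterated yields $w(a)=w(2^0a)\le 2^{k+1}w(2^{k+1}a)$, hence
$$
\log w(\theta)\ \ge\ \log w(2^{k+1}a)\ \ge\ \log w(a)-(k+1)\log 2,\qquad \theta\in I_k.
$$
Therefore, on the ring $I_k$ the contribution to $\log|f_w(r)|$ is at least $\varpi(r,I_k,\DD)\bigl(\log w(a)-(k+1)\log2\bigr)$ up to the comparability constants between kernel and harmonic measure — but one must be careful here: the factor $\log w(a)$ may be negative, so replacing the Poisson kernel by a comparable multiple of harmonic measure is only legitimate after separating signs. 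The clean way is to write $\log w(\theta)\ge \log w(a)-(k+1)\log 2$ and then integrate against the (nonnegative) Poisson kernel directly:
$$
\log|f_w(r)|\ \ge\ \sum_{k=0}^{N-1}\Bigl(\log w(a)-(k+1)\log2\Bigr)\,\varpi(r,I_k,\DD)\ +\ O(1).
$$

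Now $\sum_{k=0}^{N-1}\varpi(r,I_k,\DD)\asymp 1$ (the arcs $I_0,\dots,I_{N-1}$ cover $(0,\pi)$ up to an $O(1)$ leftover of harmonic measure), so the term $\log w(a)\sum_k\varpi(r,I_k,\DD)$ contributes $\log w(a)+O(|\log w(a)|\cdot 0)$... more precisely it equals $\bigl(1+o(1)\bigr)\log w(a)$ with absolute comparability, and the correction term $-\log 2\sum_{k=0}^{N-1}(k+1)\varpi(r,I_k,\DD)$ is exactly $O(1)$ by Lemma \ref{meshar}. Combining, $\log|f_w(r)|\ge \log w(a)-C$ for an absolute constant $C$, i.e. $|f_w(r)|\gtrsim w(1-r)$, which is the claim. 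The main obstacle, and the point deserving the most care in the write-up, is the sign issue just flagged: since $\log w(a)$ need not have a definite sign, one cannot blithely pass from the Poisson kernel to harmonic measure inside the term carrying $\log w(a)$; the safe route is to keep the genuine Poisson kernel for that term (getting $\log w(a)\cdot\tfrac{1}{\pi}\int_0^{a_N}\tfrac{1-r^2}{|1-re^{i\theta}|^2}d\theta$, whose integral is $\asymp 1$ with absolute constants regardless of sign) and to invoke Lemma \ref{meshar} only for the manifestly negative staircase correction $\sum_k (k+1)\varpi(r,I_k,\DD)$, where comparability causes no trouble. A secondary technicality is the handling of the leftover arc near $\theta=\pi$ and the behaviour of $\log w$ there, which is controlled by continuity and positivity of $w$ on the compact interval $[\pi/2,\pi]$.
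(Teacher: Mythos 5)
Your route is the same as the paper's: Poisson representation of $\log|f_w(r)|$, dyadic arcs $I_k$, the bound $\log w(\theta)\ge \log w(1-r)-(k+1)\log 2$ on $I_k$ from monotonicity plus the doubling hypothesis, and Lemma \ref{meshar} to control the staircase correction; your insistence on keeping the genuine Poisson kernel in the term carrying $\log w(1-r)$, rather than passing to a merely comparable harmonic measure, is exactly the right instinct and is how the paper's computation is meant to be read.

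The one step that does not hold up as written is the bookkeeping for the leftover arc $(a_N,\pi)$ together with the claim that the main term equals $(1+o(1))\log w(1-r)$ ``with absolute comparability''. First, ``$\log w$ is bounded near $\pi$'' only yields a constant depending on $w$, whereas the implied constant in the proposition must be absolute: it is later applied with $w$ depending on $r$ and $\mu$ (namely $w(x)=1+\int_{2\pi(x+a)/(2\pi+a)}^{2\pi}ds/\varphi(s)$, which can be arbitrarily large). Second, even with the genuine Poisson kernel, the mass multiplying $\log w(1-r)$ is $\frac1\pi\int_0^{a_N}\frac{1-r^2}{|1-re^{i\theta}|^2}\,d\theta=1-\delta$ with $\delta\asymp 1-r$ when $a_N<\pi$, and the lost amount $\delta\,\log w(1-r)$ is not bounded by an absolute constant when $\log w(1-r)$ is large and positive (take $w$ equal to a huge constant: the discarded leftover contribution is precisely what compensates this loss). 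The cure is exactly your parenthetical alternative, which is also what the paper does by reducing to $2^N a=\pi$: keep the leftover arc in the scheme and bound $\log w$ on it by $\log w(1-r)-(N+1)\log 2$ via the same doubling chain; then the coefficient of $\log w(1-r)$ is the full Poisson mass $\frac1\pi\int_0^{\pi}\frac{1-r^2}{|1-re^{i\theta}|^2}\,d\theta=1$ exactly, and only the staircase correction, which Lemma \ref{meshar} bounds by an absolute constant, remains. (A cosmetic point: with $\varpi(r,I_k,\DD)=\frac1{2\pi}\int_{I_k}\frac{1-r^2}{|1-re^{i\theta}|^2}\,d\theta$, the weight of $I_k$ in your integral reduced to $(0,\pi)$ is $2\varpi(r,I_k,\DD)$; this factor of $2$ is harmless for the correction term, but it is why one should not argue through ``$\sum_k\varpi(r,I_k,\DD)\asymp 1$'' for the main term.)
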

\begin{proof}
Let $a$ , $I_k$  and $N$ as in lemma \ref{meshar}, and suppose that $a_N= \pi$. We have 
\begin{eqnarray*}
 |f_w(r)|&= &\exp \Big \{ \sum_{k=0}^{N-1} \frac{1}{\pi}\int_{I_k}\frac{1-r^2}{|1-re^{i\theta}|^2}\log w(\theta) d\theta \Big \} \\
&\geq & \exp \Big \{\sum_{k=0}^{N-1} \log \omega(2^{k+1} a) \frac{1}{\pi}\int_{I_k}\frac{1-r^2}{|1-re^{i\theta}|^2} 
d\theta \Big \}\\
&\geq  & \exp  \Big \{ \sum_{k=0}^{N-1} (\log \omega(a)-(k+1)\log 2) \frac{1}{\pi}\int_{I_k}\frac{1-r^2}{|1-re^{i\theta}|^2}d\theta  \Big \}\\
&\geq  &\exp \Big  \{ \log w (a) \sum_{k=0}^{N-1}\varpi(r,I_k,\DD) -\log 2 \sum_{k=0}^{N-1}(k+1)\varpi(r,I_k,\DD)  \Big \}.  \\
\end{eqnarray*}
We obtain from Lemma \ref{meshar}, that $w(1-r) \lesssim |f_w(r)|$. The case $a_N<\pi$ can be treated in the same way by taking into account the interval  $[e^{ia_N},e^{i\pi}]$.
\end{proof}

\subsection{Regularization lemma}Let $\mu$ be a positive finite measure on $\TT$,  we set $d\mu(s)=d\mu(e^{is})$. Denote by  
$$\widehat{\mu} (s)=\mu([e^{-is},e^{is}])\ \ (0\leq s \leq \pi)\ \ \mbox{and}\ \ \widehat{\mu} (s) = \widehat{\mu} (\pi)\ \ (s>\pi).$$ 
Let
 \begin{equation}\label{pfmu}
 F_\mu(x)=\int_{-\pi}^{\pi}\frac{x^2}{x^2+s^2}d\mu(s)\qquad x>0.
 \end{equation}
Note that $F_\mu$ is increasing and $F_\mu(x)/x^2$ is decreasing. We extend  $F_\mu$ at the origin by $ F_\mu (0)= F_\mu (0^+)$. In the following lemma we collect some elementary properties of $F_\mu$ which will be used in the sequel.
\begin{lem}\label{lemFmu1}  Let $\nu$ be  a positive finite measure on $\TT$.  We have  the following
 \begin{enumerate}
\item $ F_\nu(x)\asymp x P[\nu](1-x)$, for $x>0$,\\
\item $ \widehat{\nu} (x)\lesssim  F_\nu(x)$  for $x\geq 0$, \\
\item $\displaystyle  \int_{x\leq |s|\leq \pi}\frac{d{\nu} (s)}{s^2}\lesssim  \frac{F_\nu(x)}{x^2}$,  for $x>0$,\\
\item If $h$ is a positive monotone function on $(0,\pi)$. Then 
$$\int_{-a}^{a}h(|x|)d\nu(x)\lesssim \int^{a}_{0}\frac{h(x)}{x}\widehat{\nu}(2x)dx,$$
\item If $\widehat{\nu}(2x)\leq c \widehat{\nu}(x)$ for some constant $c<4$,  then $\displaystyle F_\nu(x)\lesssim \frac{\widehat{\nu} (x)}{4-c}$.
\end{enumerate}
\end{lem}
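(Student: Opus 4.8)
The plan is to verify the five assertions by direct estimates; only (4) calls for any real care.

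For (1) the starting point is the elementary identity $|e^{is}-(1-x)|^{2}=x^{2}+2(1-x)(1-\cos s)$, which, combined with $1-\cos s\asymp s^{2}$ on $[-\pi,\pi]$, gives $|e^{is}-(1-x)|^{2}\asymp x^{2}+s^{2}$ with absolute constants for $0<x<1$ and $s\in[-\pi,\pi]$ (one may split off the trivial range $1/2<x<1$, where both sides are $\asymp 1$). Since moreover $1-(1-x)^{2}=x(2-x)\asymp x$, substituting into the Poisson integral yields $P[\nu](1-x)\asymp\int_{-\pi}^{\pi}\frac{x}{x^{2}+s^{2}}\,d\nu(s)$, and multiplying by $x$ produces exactly $F_{\nu}(x)$; this is the range of $x$ relevant in the sequel (for larger $x$ the Poisson integral $P[\nu](1-x)$ loses its meaning).

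Assertions (2), (3) and (5) follow from pointwise control of the kernel $\frac{x^{2}}{x^{2}+s^{2}}$. For (2) one restricts the defining integral of $F_{\nu}$ to $\{|s|\le x\}$, where the kernel is $\ge\frac12$, to get $F_{\nu}(x)\ge\frac12\widehat{\nu}(x)$ (at $x=0$ both sides equal $\nu(\{0\})$ by dominated convergence). For (3), on $\{|s|\ge x\}$ one has $x^{2}+s^{2}\le 2s^{2}$, hence $s^{-2}\le 2(x^{2}+s^{2})^{-1}$, and integrating over $\{x\le|s|\le\pi\}$ gives $\int_{x\le|s|\le\pi}s^{-2}\,d\nu(s)\le 2x^{-2}F_{\nu}(x)$. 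For (5) one writes $F_{\nu}(x)=\int_{|s|\le x}+\int_{x<|s|\le\pi}$; the first term is $\le\widehat{\nu}(x)$, and the second is split over the dyadic annuli $\{2^{k-1}x<|s|\le 2^{k}x\}$, $k\ge1$, on which $\frac{x^{2}}{x^{2}+s^{2}}\le 4^{1-k}$. Iterating the doubling hypothesis gives $\widehat{\nu}(2^{k}x)\le c^{k}\widehat{\nu}(x)$, so the tail is at most $4\widehat{\nu}(x)\sum_{k\ge1}(c/4)^{k}=\tfrac{4c}{4-c}\widehat{\nu}(x)$, the geometric series converging precisely because $c<4$. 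Adding the two pieces, $F_{\nu}(x)\le\tfrac{4+3c}{4-c}\widehat{\nu}(x)\le\tfrac{16}{4-c}\widehat{\nu}(x)$, which is the claim.

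Assertion (4) is the delicate one, and I would attack it by a layer--cake/Fubini argument. Writing $\widehat{\nu}(2x)=\int_{-\pi}^{\pi}\un_{\{|s|\le 2x\}}\,d\nu(s)$ (legitimate for every $x>0$ since $\widehat{\nu}$ is constant past $\pi$) and interchanging the order of integration turns the right--hand side into $\int_{\{|s|<2a\}}\bigl(\int_{|s|/2}^{a}\tfrac{h(x)}{x}\,dx\bigr)d\nu(s)$; as $\{|s|\le a\}\subset\{|s|<2a\}$, it then suffices to establish the pointwise inequality $h(t)\lesssim\int_{t/2}^{a}\tfrac{h(x)}{x}\,dx$ for $0<t\le a$. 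If $h$ is decreasing this is immediate, since $\int_{t/2}^{a}\tfrac{h(x)}{x}\,dx\ge\int_{t/2}^{t}\tfrac{h(x)}{x}\,dx\ge h(t)\log 2$; the increasing case is handled the same way on the interval $[t,2t]\subset[t/2,a]$. I expect the only genuinely delicate point to be making this comparison uniform in $t$ in the increasing case, where near the endpoint $t=a$ there is little room to spare, so one must either take $a=\pi$ or use that the weights $h$ arising later grow at most polynomially; the decreasing case, which is the one actually invoked in what follows, is completely clean.
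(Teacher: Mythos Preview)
Your arguments for (1)--(3) and (5) match the paper's (which calls (1)--(3) ``obvious'' and gives the same dyadic tail estimate for (5)). For (4) the routes diverge: the paper decomposes the \emph{left}-hand side over dyadic shells $2^{-(n+1)}a\le|x|\le 2^{-n}a$ and compares $\sum_n h(2^{-n}a)\,\widehat{\nu}(2^{-n}a)$ to the integral term by term, whereas you Fubini the \emph{right}-hand side and reduce to the pointwise bound $h(t)\lesssim\int_{t/2}^{a}h(x)\,dx/x$. Both are equally short, and both are clean for decreasing $h$. Your caution about the increasing endpoint is in fact sharper than the paper's treatment: with $\nu=\delta_{e^{ia}}$ the claim becomes $h(a)\lesssim\int_{a/2}^{a}h(x)\,dx/x$, which fails for any absolute constant when $h$ lacks a doubling bound (take $h(x)=e^{Mx}$, $M\to\infty$), and the paper's one-line ``analogue argument works if $h$ is increasing'' hides the same missing $n=0$ term in its dyadic sum. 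As you correctly observe, only decreasing $h$ is invoked downstream (in Theorem~\ref{coronorm} with $h=|w'|^{2}$ and in Lemma~\ref{lem11}), so nothing else is affected.
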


\begin{proof}  (1), (2) and (3) are obvious.
To prove (4) suppose that $h$ is a decreasing function. Clearly if $\nu (\{ 0\})>0$, then (4) is obvious. So, suppose that  $\nu (\{ 0\})=0$. We have
\[
\begin{array}{ccl}
\displaystyle \int _{-a}^{a}h(|x|)d\nu(x) & =  &\displaystyle  \sum_{n\geq 0}  \int_{2^{-(n+1)}a}^{ 2^{-n}a}h(x)  (d\nu(x)+d\nu(-x)) \\

& \lesssim &\displaystyle  \sum_{n=0}^{m} h(2^{-n}a)\widehat{\nu}(2^{-n}a)\\
&\lesssim &  \displaystyle  \int^{a}_{0}\frac{h(x)}{x}\widehat{\nu}(2x)dx.\\

\end{array}
\]
Analogue argument works if $h$ is increasing.\\
Finally to prove (5), suppose that $\widehat{\nu}(2x)\leq c \widehat{\nu}(x)$  with $c<4$. We have $ \widehat{\nu}(2^nx)\leq c^n\widehat{\nu}(x)$ and 
$$
\int_{|t|\geq {x}} \frac{d\nu(t)}
{t^2}dt=\sum_{n\geq 0}\int\limits_{2^nx\leq |t|\leq 2^{n+1}x }\frac{d\nu(t)}{t^2}
\leq\sum_{n\geq 0}\frac{\widehat{\nu} (2^{n+1}x)}{2^{2n}x^2}
\leq\frac{4c}{4-c}\frac{\widehat{\nu} (x)}{x^2}. 
$$
So 
$$F_\nu(x)\leq \widehat{\nu} (x)+  \int_{|\theta|>x}\frac{x^2}{x^2+\theta^2}d\nu(\theta)\leq \widehat{\nu} (x)+x^2\int_{\theta\geq x}\frac{d\nu(x)}{\theta^2}\lesssim \frac{ \widehat{\nu} (x)}{4-c} .$$

\end{proof}


\section{Norm estimate of test functions}

\subsection{Norm estimate of  analytic test functions }The purpose of this subsection is to give estimate of norms of some outer functions which play an important role in what follows.

 The following lemma is the first step to prove Theorem \ref {coronorm}.
 
 \begin{lem} \label{normf} Let  $w:[0, \pi]\to (0,+\infty)$ be a $C^1$  decreasing  convex function such that $w(x)\leq 2w(2x)$. Suppose that $ x^2|w'(x)|$ is increasing and let $f_w$ be the outer function given by \eqref{outerfunction}. Then 
 $$ \cD_\mu(f_w)\lesssim \cJ_1+\cJ_2+\cJ_3,$$
where 
\begin{align*}
 \cJ_1&:=\int_{x=0}^{\pi}\int_{y=0}^{x}  |w'(y)|w(y)\frac{|w'(x)|}{w(x)}  \frac{\widehat{\mu}(y)}{x} dxdy, \\
 \cJ_2 &:=\int_{s=-\pi}^{\pi}  w'(s)^2sd\mu(s),\\
 \cJ_3 &:= \int_{x=0}^{\pi}\int_{y=x}^{\pi}x  |w'(y)|w(y)\frac{|w'(x)|}{w(x)}
\Big( \int_{y\leq |s|\leq \pi}\frac{d\mu(s)}{s^2}\Big) dxdy.  
\end{align*}
 \end{lem}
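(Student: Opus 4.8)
The plan is to compute $\cD_\mu(f_w)$ from the Richter--Sundberg formula and then dissect the resulting double integral according to the relative size of the two variables, matching the three pieces with $\cJ_1,\cJ_2,\cJ_3$. Write $\cD_\mu(f_w)=\int_\TT\cD_\xi(f_w)\,d\mu(\xi)$; since $f_w$ is outer and $|f_w(e^{it})|=w(|t|)$, formula \eqref{formuleRS} gives, for $\xi=e^{i\theta}$,
$$\cD_\xi(f_w)=\int_{-\pi}^{\pi}\frac{\Phi\bigl(w(|\theta|),w(|t|)\bigr)}{|e^{it}-e^{i\theta}|^{2}}\,\frac{dt}{2\pi},\qquad \Phi(a,b):=b^{2}-a^{2}-2a^{2}\log(b/a)\ \ (\,\ge 0\,).$$
Since $\Phi$ depends only on $|t|$ and $|\theta|$, the inner integral depends only on $|\theta|$, so we may assume $\theta\in[0,\pi]$; moreover $|e^{it}-e^{i\theta}|^{-2}\asymp|t-\theta|^{-2}$ when $t\theta>0$ and $\asymp(|t|+\theta)^{-2}$ when $t\theta<0$, away from the antipodal configuration $|t|\approx\pi\approx\theta$ (treated at the end).

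The key point is an exact representation of $\Phi$. Since $\Phi(a,a)=0$ and $\partial_{b}\Phi(a,b)=2(b^{2}-a^{2})/b$ also vanishes at $b=a$, two integrations give $\Phi(a,b)=4\int_{a}^{b}\bigl(\int_{a}^{\beta}(\gamma/\beta)\,d\gamma\bigr)d\beta$ for $b\ge a$, and the mirror formula for $b\le a$. Substituting $\beta=w(x)$, $\gamma=w(y)$ (and using that $w$ is decreasing, so $b\ge a$ corresponds to $|t|\le\theta$) turns this into
$$\Phi\bigl(w(\theta),w(|t|)\bigr)=4\int_{|t|}^{\theta}\frac{|w'(x)|}{w(x)}\Bigl(\int_{x}^{\theta}w(y)|w'(y)|\,dy\Bigr)dx\qquad(|t|\le\theta),$$
and the analogous identity with $\int_{\theta}^{|t|}\cdots\bigl(\int_{\theta}^{x}\cdots\bigr)$ when $|t|\ge\theta$; all integrands are now nonnegative, which legitimizes every Fubini exchange below.

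Now split the $t$-integral into three regions. \emph{(a) Comparable region $\theta/2<|t|<2\theta$.} Here $|t|\asymp\theta$, so the doubling hypothesis gives $w(x)\asymp w(\theta)$ for $x$ between $|t|$ and $\theta$, while convexity of $w$ bounds $|w'|$ from below for $x<\theta$ and from above for $x>\theta$, and monotonicity of $x\mapsto x^2|w'(x)|$ supplies the two reverse bounds, so that $|w'(x)|\asymp|w'(\theta)|$ there; plugging this into the identity for $\Phi$ yields $\Phi\asymp w'(\theta)^{2}\bigl||t|-\theta\bigr|^{2}$, hence dividing by $|e^{it}-e^{i\theta}|^{2}$ and integrating $t$ over a set of measure $\asymp\theta$ gives a contribution $\lesssim w'(\theta)^{2}\theta$; integrating $d\mu(\xi)$ produces $\cJ_{2}$. \emph{(b) Region $|t|<\theta/2$.} Here $|e^{it}-e^{i\theta}|^{-2}\asymp\theta^{-2}$; insert the first identity for $\Phi$, use Fubini in $(t,x)$ (the $t$-integration contributes a factor $\asymp x$), then Fubini exchanging the $\xi$-integral with the $(x,y)$-integral over $x<y<\theta$; this leaves the $\mu$-mass precisely in the form $\int_{y\le|s|\le\pi}s^{-2}\,d\mu(s)$, and after relabelling one reads off $\cJ_{3}$. \emph{(c) Region $|t|>2\theta$.} Here $|e^{it}-e^{i\theta}|^{-2}\asymp|t|^{-2}$; insert the second identity for $\Phi$, Fubini in $(t,x)$ now contributes $\int_{|t|>x}|t|^{-2}dt\asymp x^{-1}$, and Fubini exchanging the $\xi$-integral with the $(x,y)$-integral over $\theta<y<x$ leaves $\int_{|\arg\xi|<y}d\mu(\xi)\le\widehat{\mu}(y)$; relabelling gives $\cJ_{1}$. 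Summing the three contributions yields $\cD_\mu(f_w)\lesssim\cJ_{1}+\cJ_{2}+\cJ_{3}$.

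I expect the main obstacle to be the analysis of region (a): it is there, and essentially only there, that both structural hypotheses on $w$ are used (convexity and $x^2|w'(x)|\uparrow$ together pin $|w'|$ to $|w'(\theta)|$ on the scale $x\asymp\theta$, and the doubling condition $w(x)\le 2w(2x)$ keeps $w(|t|)/w(\theta)$ bounded so that $\Phi$ does not degenerate). One must also check that $\Phi/|e^{it}-e^{i\theta}|^{2}$ stays integrable at the diagonal $t=\theta$ and on the antipodal edge $|t|\approx\pi$, $\theta\approx\pi$, $t\theta<0$, where $|e^{it}-e^{i\theta}|\asymp 2\pi-|t|-\theta\gtrsim\bigl||t|-\theta\bigr|$ forces that piece to be $\lesssim w'(\theta)^{2}\theta$, again absorbed into $\cJ_{2}$. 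Everything else is the routine Fubini bookkeeping sketched above, which becomes mechanical once the exact formula for $\Phi$ is in hand.
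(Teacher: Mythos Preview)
Your proposal is correct and follows essentially the same route as the paper: the paper also starts from the Richter--Sundberg formula, rewrites the numerator as the double integral $\int_{s}^{t}\int_{s}^{x}w'(y)w(y)\frac{w'(x)}{w(x)}\,dy\,dx$ (your formula for $\Phi$), and splits the $t$-integral into the same three regions $t\in[2s,\pi]$, $t\in[s/2,2s]$, $t\in[0,s/2]$, which after the same Fubini manipulations yield $\cJ_1$, $\cJ_2$, $\cJ_3$ respectively. Your treatment is in fact slightly more explicit than the paper's in two places: you spell out why $|w'(x)|\asymp|w'(\theta)|$ on the comparable region (the paper simply asserts $w'(2x)\asymp w'(x)$), and you address the antipodal edge $t\approx-\pi$, $\theta\approx\pi$, which the paper handles implicitly by symmetrizing $\mu$ and bounding the $t<0$ contribution by the $t>0$ one.
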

 \begin{proof} Without loss of generality, we may assume that $d\mu(s)=d\mu(-s)$. By Richter-Sundberg formula \eqref{formuleRS}
 we have 
 $$\cD_{e^{is}}(f_w)=\frac{8}{2\pi}\int_{t=0}^{\pi}\int_{x=s}^{t}\int_{y=s}^{x}w'(y)w(y)\frac{w'(x)}{w(x)}dydx\frac{dt}{|e^{is}-e^{it}|^2}.$$
 So, 
 \begin{eqnarray*}
 \cD_\mu(f_w)&=&\frac{16}{2\pi}\int_{s=0}^{\pi} \int_{t=0}^{\pi} \int_{x=s}^{t}\int_{y=s}^{x}w'(y)w(y)\frac{w'(x)}{w(x)}dydxdt \frac{d\mu(s)}{|e^{is}-e^{it}|^2}\\
 &=&\underbrace{\int_{s=0}^{\pi}\int_{t=2s}^{\pi}\ldots }_{\cI_1}+\underbrace{\int_{s=0}^{\pi}\int_{t=s/2}^{2s}\ldots}_{\cI_2}+
\underbrace{\int_{s=0}^{\pi}\int_{t=0}^{s/2}\ldots}_{\cI_3}.
\end{eqnarray*}
To complete the proof we will estimate each term separately. \\ 

If $2s\leq t\leq \pi$, we have  $|t-s|\geq t/2$,  
\begin{eqnarray*}
\cI_1&\lesssim &\int_{s=0}^{\pi}\int_{t=s}^{\pi}\int_{x=s}^{t} \int_{y=s}^{x} |w'(y)|w(y)\frac{|w'(x)|}{w(x)}dydx\frac{dt}{t^2}d\mu(s)\\
&\lesssim&\int_{t=0}^{\pi}\int_{x=0}^{t}\int_{y=0}^{x}|w'(y)|w(y)\frac{|w'(x)|}{w(x)} \Big(\int_{|s|\leq y}d\mu(s)\Big)\frac{dt}{t^2}dydx \\
&\lesssim&\int_{x=0}^{\pi}\int_{y=0}^{x}|w'(y)|w(y)\frac{|w'(x)|}{w(x)} \frac{\widehat{\mu}(y)}{x}dydx
\end{eqnarray*}

Since $w(2x) \asymp w(x)$ and $w'(2x)\asymp w'(x)$, we have
 $$\cI_2\lesssim \int_{s=-\pi}^{\pi}  w'(s)^2sd\mu(s).$$

If $0\leq t\leq s/2$, then  $|t-s|\geq s/2$,  
\begin{eqnarray*}
\cI_3&\lesssim & \int_{s=0}^{\pi}\int_{t=0}^{s}\int_{x=t}^{s}\int_{y=x}^{s}|w'(y)|w(y)\frac{|w'(x)|}{w(x)}dydx \frac{d\mu(s)}{s^2}dt\\
&\lesssim&\int_{t=0}^{\pi}\int_{x=t}^{\pi}\int_{y=x}^{\pi}|w'(y)|w(y)\frac{|w'(x)|}{w(x)} \Big(\int_{y\leq |s|\leq \pi}\frac{d\mu(s)}{s^2}\Big){dt} \\
 &\lesssim & \int_{x=0}^{\pi}\int_{y=x}^{\pi}|w'(y)|w(y)\frac{|w'(x)|}{w(x)} 
 \Big(\int_{y\leq |s|\leq \pi }^{\pi}\frac{d\mu(s)}{s^2} \Big)xdxdy.
\end{eqnarray*}
\end{proof}
\begin{theo}\label{coronorm}Let  $w:[0, \pi]\to (0,+\infty)$ be a $C^1$  decreasing  convex function such that $w(x)\leq 2w(2x)$. Suppose that $ x^2|w'(x)|$ is increasing and let $f_w$ be the outer function given by \eqref{outerfunction}. 
Then 
  $$ \cD_\mu(f_w)\lesssim  \|F_\mu w'   \|_\infty \|w\|_\infty,$$
where $F_\mu$ is given by \eqref{pfmu}.
 \end{theo}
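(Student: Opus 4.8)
The plan is to start from Lemma \ref{normf}, which bounds $\cD_\mu(f_w)$ by $\cJ_1+\cJ_2+\cJ_3$, and estimate each of the three integrals by $\|F_\mu w'\|_\infty\|w\|_\infty$. The common strategy for all three is to peel off the factors that are controlled by the sup-norms and reduce the remaining double integrals to something that telescopes or integrates to a bounded quantity. For $\cJ_1$, I would bound $\widehat\mu(y)\lesssim F_\mu(y)$ using Lemma \ref{lemFmu1}(2), so that $|w'(y)|\widehat\mu(y)\lesssim \|F_\mu w'\|_\infty$; what remains is $\int_0^\pi\int_0^x w(y)\frac{|w'(x)|}{w(x)}\frac{dx\,dy}{x}$. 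Here $w(y)\le w(0)\le\|w\|_\infty$ near the inner integral, but more care is needed: the inner integral $\int_0^x w(y)\,dy \le x\,w(0)$ would only give $\|w\|_\infty^2$ after integrating $|w'(x)|/w(x)$... so instead I expect one uses $\int_0^x \frac{w(y)}{w(x)}\,dy$, and the convexity/doubling of $w$ to control this, leaving $\int_0^\pi |w'(x)|\,dx \le w(0)\le\|w\|_\infty$.

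For $\cJ_2=\int_{-\pi}^\pi w'(s)^2 s\,d\mu(s)$, the idea is to write $w'(s)^2 s\,d\mu(s) = \big(|w'(s)|\,\widehat\mu\text{-type factor}\big)\cdot\big(\text{something}\big)$; more precisely, using Lemma \ref{lemFmu1}(4) with the monotone function $h(s)=w'(s)^2 s$ (which one should check is monotone, or split it), $\cJ_2\lesssim \int_0^\pi |w'(x)|^2\,\widehat\mu(2x)\,dx$. Then bound one factor $|w'(x)|\,\widehat\mu(2x)$: since $\widehat\mu(2x)\lesssim \widehat\mu(x)\lesssim F_\mu(x)$... actually $\widehat\mu$ need not be doubling, so more likely one writes $\widehat\mu(2x)\lesssim F_\mu(2x)\asymp F_\mu(x)$ up to adjusting, giving $|w'(x)|\widehat\mu(2x)\lesssim\|F_\mu w'\|_\infty$; the leftover $\int_0^\pi |w'(x)|\,dx\le\|w\|_\infty$. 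For $\cJ_3$, use Lemma \ref{lemFmu1}(3) to replace $\int_{y\le|s|\le\pi}d\mu(s)/s^2$ by $F_\mu(y)/y^2$, so the $y$-integrand becomes $|w'(y)|F_\mu(y)\cdot\frac{w(y)}{y^2}\lesssim\|F_\mu w'\|_\infty\,\frac{w(y)}{y^2}$; then $\cJ_3\lesssim\|F_\mu w'\|_\infty\int_0^\pi\int_x^\pi \frac{w(y)}{y^2}\,dy\cdot x\frac{|w'(x)|}{w(x)}\,dx$, and one integrates $\int_x^\pi w(y)/y^2\,dy$ (using that $w$ is decreasing and the hypothesis that $x^2|w'(x)|$ is increasing, so $|w'(y)|\le |w'(x)|(x/y)^2$ type control relating $w$ and $w'$) to extract another $\|w\|_\infty$ and reduce the outer integral to $\int_0^\pi|w'(x)|\,dx$.

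The technical heart, and the step I expect to be the main obstacle, is handling the interplay between $w$ and $w'$ in the double integrals of $\cJ_1$ and $\cJ_3$: one must exploit the precise hypotheses — $w$ decreasing and convex, $w(x)\le 2w(2x)$, and $x^2|w'(x)|$ increasing — to show that quantities like $\int_0^x w(y)\,dy$, $\int_x^\pi w(y)/y^2\,dy$, and $\int_x^\pi |w'(y)|w(y)\,dy$ are comparable to $x\,w(x)$, $w(x)/x$, and $w(x)^2$ respectively (or the appropriate analogues), so that after cancelling $w(x)$ in the denominator only $\int_0^\pi|w'(x)|\,dx=w(0)-w(\pi)\le\|w\|_\infty$ survives. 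The doubling condition $w(x)\le 2w(2x)$ (hence $w(x)\asymp w(2x)$) is what prevents $w$ from decaying too fast, and $x^2|w'(x)|$ increasing is the complementary bound preventing $w'$ from being too large near $0$; together they pin down the shape of $w$ enough to make these comparisons. Once those lemmas-within-the-proof are established, assembling $\cJ_1+\cJ_2+\cJ_3\lesssim\|F_\mu w'\|_\infty\|w\|_\infty$ is immediate.
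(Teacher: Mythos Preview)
Your overall plan matches the paper's: invoke Lemma~\ref{normf} and bound $\cJ_1,\cJ_2,\cJ_3$ separately by $\|F_\mu w'\|_\infty\|w\|_\infty$. Your treatment of $\cJ_3$ (replace the $\mu$-integral via Lemma~\ref{lemFmu1}(3), extract $\|F_\mu w'\|_\infty$, then use $\int_x^\pi w(y)/y^2\,dy\le w(x)/x$) and of $\cJ_2$ (apply Lemma~\ref{lemFmu1}(4) --- cleanest with $d\nu=s\,d\mu$ and $h=|w'|^2$, which is decreasing by convexity, so monotonicity is not an issue --- then $\widehat\mu(2t)\lesssim F_\mu(t)$) are essentially what the paper does.

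The gap is in $\cJ_1$. After extracting $\|F_\mu w'\|_\infty$ you are left with
\[
R_1=\int_0^\pi\int_0^x \frac{w(y)\,|w'(x)|}{w(x)\,x}\,dy\,dx,
\]
and you propose to close via $\int_0^x w(y)\,dy\lesssim x\,w(x)$. That comparison is \emph{false} under the stated hypotheses: the doubling condition $w(x)\le 2w(2x)$ is exactly borderline. For a smoothed version of $w(x)=\min(1/\epsilon,1/x)$ one has, for $x\in(\epsilon,\pi)$, $\int_0^x w\ge 1+\log(x/\epsilon)$ while $xw(x)=1$, so no absolute constant works as $\epsilon\to0$. (Your other two proposed comparisons, $\int_x^\pi w(y)/y^2\,dy\asymp w(x)/x$ and $\int_x^\pi |w'|w\le \tfrac12 w(x)^2$, are correct; only this first one fails.)

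The paper's remedy is different. It first proves the pointwise bound $x|w'(x)|\lesssim w(x)$ by writing $w(x)\ge\int_x^\pi t^2|w'(t)|\,t^{-2}\,dt\ge x^2|w'(x)|\int_x^\pi t^{-2}\,dt$. Then, instead of estimating $\int_0^x w(y)\,dy$, it writes $w(y)=w(\pi)+\int_y^\pi|w'(u)|\,du$ inside $R_1$. The constant piece is harmless since $w(\pi)/w(x)\le1$, leaving $\int_0^\pi|w'(x)|\,dx\le\|w\|_\infty$. The integral piece becomes a triple integral; Fubini in $(y,u)$ produces a factor $\min(u,x)$, and splitting into $x\le u$ and $x\ge u$, together with $|w'(x)|/w(x)\lesssim1/x$, reduces each region to $\int_0^\pi|w'|$. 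So the missing idea is: do not try to bound $\int_0^x w$ directly; express $w(y)$ via the fundamental theorem of calculus and trade the extra $u$-integration against the inequality $x|w'(x)|\lesssim w(x)$.
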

\begin{proof} 
By (3) of Lemma \ref{lemFmu1}, we have  $\widehat{\mu}\leq F_\mu$. Now  Lemma \ref{normf} gives  
  \begin{eqnarray*}
\cJ_3&\lesssim &
\int_{x=0}^{\pi}\int_{y=x}^{\pi}|w'(y)|w(y)\frac{|w'(x)|}{w(x)}\frac{F_\mu(y)}{y^2} x dydx   \\
& \lesssim &   \|F_\mu w'  \|_\infty\int_{x=0}^{\pi}\int_{y=x}^{\pi} |w'(x)|\frac{w(y)x}{w(x)y^2} dydx 
\\
& \lesssim&  \|F_\mu w'  \|_\infty \int_{x=0}^{\pi}\int_{y=x}^{\pi} |w'(x)|\frac{x}{y^2} dydx \\
 &\lesssim&    \|F_\mu w'  \|_\infty \| w\|_\infty.
    \end{eqnarray*}
Note that $ x|w'(x)| \lesssim w(x)$ for all $x\in ]0,\pi]$. Indeed, since $w(2x)\asymp w(x)$ and $|w'(2x)|\asymp |w'(x)|$, it suffices to prove the inequality for $x\in [0,\pi /2]$. We have
 
$$w(x)\geq \int_x^{\pi} {t^2}|w'(t)|\frac{dt}{t^2}\geq x^2|w'(x)|\Big(\frac{1}{x}-\frac{1}{\pi}\Big)\geq \frac{x}{2} |w'(x)|.$$ 
So, again by Lemma \ref{lemFmu1} we get 
\begin{eqnarray*}
\cJ_1
 & \lesssim  &  \|F_\mu w'  \|_\infty\int_{x=0}^{\pi}\int_{y=0}^{x}|w'(x)|\frac{w(y)}{w(x)x} dydx\\
 &\lesssim&  \|F_\mu  w'  \|_\infty \int_{x=0}^{\pi}\int_{y=0}^{x} \displaystyle \int _{u=y}^\pi \frac{|w'(x)||w'(u)|}{w(x)x} dudydx\\
 &+&   \|F_\mu w'  \|_\infty\int_{x=0}^{\pi}\int_{y=0}^{x}|w'(x)|\frac{w(\pi)}{w(x)x} dydx\\
 &=& \cJ_{12}+\cJ_{22}.
 \end{eqnarray*}
 We have 
\begin{eqnarray*}
 \cJ_{12}&\lesssim&  \|F_\mu  w'  \|_\infty \int_{u=0}^{\pi}\int_{x=0}^{\pi}\int_{y=0}^{\min(u, x)} \frac{|w'(x)||w'(u)|}{w(x)x} dudydx\\
 &\lesssim& \|F_{\mu}  w' \|_\infty \Big( \int_{u=0}^{\pi}\int_{x=0}^{u} \frac{|w'(x)w'(u)|}{w(x)} dudx + \int_{u=0}^{\pi}\int_{x=u}^{\pi}\frac{u|w'(x)w'(u)|}{xw(x)} dudx\Big)\\
 &\lesssim&\|F_{\mu}  w'  \|_\infty\Big(\int_{x=0}^{\pi} |w'(x)|dx+
\int_{u=0}^{\pi} \int_{x=u}^{\pi}  \frac{u|w'(x)w'(u)|}{xw(x)} dudx )\\
 &\lesssim& \|F_{\mu}  w'  \|_\infty \Big(\|w\|_\infty+
 \int_{u=0}^{\pi}\int_{x=u}^{\pi}\frac{u|w'(u)|}{x^2} dudx \Big)\\
  &\lesssim& \|F_\mu w'  \|_\infty\|w\|_\infty. 
 \end{eqnarray*}
 Since $w$ is decreasing, we get
 \begin{eqnarray*}
 \cJ_{22}&\leq &\|F_\mu w'  \|_\infty\int_{x=0}^{\pi}\int_{y=0}^{x}|w'(x)|\frac{dx}{x} dy\\
 & \leq &  \|F_\mu w'  \|_\infty \|w\|_\infty.\\
  \end{eqnarray*}
Finally, applying (4) of  Lemma \ref{lemFmu1} with  $d\nu(s)=sd\mu(s)$,  we have $\widehat{\nu}(t)\leq t\widehat{\mu}(t)$ and 
$$\cJ_2 \lesssim\int_{0}^{\pi} \frac{w'(t)^2}{t}\widehat{\nu}(2t)dt\lesssim \|F_\mu w'  \|_\infty \|w\|_\infty.$$
   \end{proof}

\subsection{Norm estimate of  test functions  in $\cD^h(\mu)$} Our goal here is to give an estimate of the norm of some distance functions in $\cD^h(\mu)$ (For analytic distance functions see \cite{EKR}).The result of this subsection will be used in the proof of Theorem \ref{capdirichlet}.\\
Let $E$ be a closed subset of $\TT$, $\mu$ be a positive finite measure and denote by  
$\rho_{\mu,E}$ the local modulus of continuity  of $\mu$ on $E$ given by \eqref{locmod}. Note that  $\rho_{\mu, \{1\}}(t)=\widehat{\mu}(t)$. Recall   that  ${N}_E(t):=2\sum_j\un_{\{|I_j|>2t\}}$.  
where $(I_j)$ are the components of $\TT\backslash E$.
\begin{lem} \label{lem11}Let $\Omega:(0,\pi]\to \RR^+$ be a positive decreasing  function,  then  
$$\int_\TT\Omega(\dist(\zeta,E))d\mu(\zeta)\lesssim \int_0^1\Omega(t)\frac{\rho_{\mu,E}(t)}{t}{N}_E(t) dt.$$
\end{lem}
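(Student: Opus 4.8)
The plan is to reduce the integral over $\TT$ to an integral over the ``annular'' regions $E_{2^{-n}}\setminus E_{2^{-(n+1)}}$ where the distance to $E$ is comparable to a fixed dyadic scale, and then to control the $\mu$-mass of each such region by the local modulus of continuity $\rho_{\mu,E}$ times the number of components $N_E$.

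First I would fix a point $\zeta\in\TT$ with $\dist(\zeta,E)=s$ and observe that $\zeta$ lies in the middle third (roughly) of one of the complementary arcs $I_j$, and that the two endpoints of that arc belong to $E$. More precisely, if $2^{-(n+1)}\le\dist(\zeta,E)<2^{-n}$, then $\zeta$ is contained in a component $I_j$ with $|I_j|>2^{-n}$, and $\zeta$ is within distance $2^{-n}$ of some point $\xi\in E$; hence $(\zeta e^{-i\cdot}, \zeta e^{i\cdot})$-type arcs centred at $\zeta$ of radius $\sim 2^{-n}$ are contained in arcs centred at a point of $E$ of radius $\sim 2^{-n}$, which have $\mu$-mass at most $\rho_{\mu,E}(C2^{-n})$. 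The key combinatorial point is that the set $\{\zeta:\dist(\zeta,E)\ge 2^{-(n+1)}\}$ meets at most $N_E(2^{-(n+1)})/2$ of the components $I_j$ and on each of them the relevant portion is covered by boundedly many arcs centred at points of $E$ of radius $\sim 2^{-n}$; so
$$
\mu\big(\{\zeta\in\TT: 2^{-(n+1)}\le\dist(\zeta,E)<2^{-n}\}\big)\ \lesssim\ N_E(2^{-(n+1)})\,\rho_{\mu,E}(2^{-n}).
$$

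With this estimate in hand, I decompose $\TT$ (up to the Lebesgue-null set $E$) into the dyadic shells $S_n:=\{2^{-(n+1)}\le\dist(\zeta,E)<2^{-n}\}$, use that $\Omega$ is decreasing to bound $\Omega(\dist(\zeta,E))\le\Omega(2^{-(n+1)})$ on $S_n$, and sum:
$$
\int_\TT\Omega(\dist(\zeta,E))\,d\mu(\zeta)
\ =\ \sum_{n\ge 0}\int_{S_n}\Omega(\dist(\zeta,E))\,d\mu(\zeta)
\ \lesssim\ \sum_{n\ge 0}\Omega(2^{-(n+1)})\,\rho_{\mu,E}(2^{-n})\,N_E(2^{-(n+1)}).
$$
Finally I convert the dyadic sum back into an integral: since $\Omega$, $\rho_{\mu,E}$ and $N_E$ are (up to the usual doubling-free monotonicity of $\rho_{\mu,E}$ and $N_E$, both being monotone in the right direction) essentially constant on $[2^{-(n+1)},2^{-n}]$, each summand is comparable to $\int_{2^{-(n+1)}}^{2^{-n}}\Omega(t)\,\rho_{\mu,E}(t)\,N_E(t)\,\frac{dt}{t}$, and summing over $n$ yields the claimed bound $\int_0^1\Omega(t)\,\frac{\rho_{\mu,E}(t)}{t}\,N_E(t)\,dt$ (the tail $\dist(\zeta,E)>1/2$, i.e. $t$ near $1$, contributes a harmless bounded term that is absorbed). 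I would also note the convention $N_E(t)=0$ for $t\ge$ half the length of the largest complementary arc, so the shells are eventually empty and the sum is genuinely over the relevant range.

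The main obstacle is the geometric/counting step: making rigorous that each dyadic shell $S_n$ is covered by $\lesssim N_E(2^{-(n+1)})$ arcs each of which is centred at a point of $E$ and has radius comparable to $2^{-n}$, so that its $\mu$-mass is $\lesssim N_E(2^{-(n+1)})\rho_{\mu,E}(C2^{-n})$. One has to be a little careful that the constant $C$ in $\rho_{\mu,E}(C2^{-n})$ does not hurt, which is fine because $\rho_{\mu,E}$ is increasing and we only need an inequality up to absolute constants (and one may further use the doubling-type comparisons on $\rho_{\mu,E}$ available in the cases of interest, though for the bare inequality of the lemma monotonicity alone suffices after adjusting the dyadic endpoints). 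Everything else is a routine monotonicity-and-summation argument of the same flavour as Lemma \ref{lemFmu1}(4).
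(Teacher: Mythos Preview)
Your argument is correct and arrives at the same bound, but the decomposition is organized differently from the paper's. The paper first splits $\TT\setminus E$ into its complementary arcs $I_n=(e^{i\alpha_n},e^{i\beta_n})$, parametrizes each by the distance to the nearer endpoint via the auxiliary measure $d\mu_n(t)=d\mu(t+\alpha_n)+d\mu(\beta_n-t)$, and then invokes Lemma~\ref{lemFmu1}(4) to get
\[
\int_{I_n}\Omega(\dist(\zeta,E))\,d\mu(\zeta)\ \lesssim\ \int_0^{|I_n|/2}\Omega(t)\,\frac{\widehat{\mu_n}(2t)}{t}\,dt\ \lesssim\ \int_0^{|I_n|/2}\Omega(t)\,\frac{\rho_{\mu,E}(t)}{t}\,dt;
\]
summing over $n$ manufactures the factor $N_E(t)$. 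You instead slice by dyadic level sets of $\dist(\cdot,E)$ first and then count the contributing arcs at each scale. The two routes are Fubini-dual rearrangements of the same double sum over pairs (arc, dyadic scale). Your covering estimate $\mu(S_n)\lesssim N_E(2^{-(n+1)})\,\rho_{\mu,E}(2^{-n})$ is exactly right and is the combinatorial core of both proofs. The paper's ordering is marginally cleaner because the awkwardness you flag---getting $\rho_{\mu,E}(2^{-n})$ rather than $\rho_{\mu,E}(2^{-(n+1)})$ when converting the dyadic sum back to an integral---is precisely why Lemma~\ref{lemFmu1}(4) outputs $\widehat\nu(2x)$ rather than $\widehat\nu(x)$; that built-in shift absorbs the factor of two without any doubling hypothesis on $\rho_{\mu,E}$, whereas in your ordering you have to insert that shift by hand.
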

\begin{proof} 
Write $\TT\setminus E=\cup_n I_n$, where $(I_n)_n=(e^{i\alpha_n},e^{i\beta_n})_n$ are the components of $\TT\setminus E$.   
Let $d\mu_n(t)=d\mu(t+\alpha_n)+d\mu(\beta _n-t)$, By Lemma \ref{lemFmu1}
\begin{align*}
\int_{I_{n}}\Omega(\dist(\zeta,E ))d\mu(\zeta)& \asymp \int_{0}^{|I_n|/2}\Omega(t)d\mu_n(t)\\
&\lesssim  \int_{0}^{|I_n|/2}\Omega(t)\frac{\widehat{\mu_n}(t)}{t}dt\\
&\lesssim  \int_{0}^{|I_n|/2}\Omega(t)\frac{\rho _{\mu,E}(t)}{t}dt.\\
\end{align*}
Summing over all $I_n$, we get 
$$\int_\TT\Omega(\dist(\zeta,E ))d\mu(\zeta)\lesssim \int_{0}^{1}\Omega(t)\frac{\rho_{\mu,E}(t)}{t}{N}_E(t)dt,
$$
and the proof is complete.
\end{proof}

\begin{lem}\label{lem22}Let $E$ be a closed subset of $\TT$. Let $w$ be a convex  decreasing function  and let  $\Omega(\zeta)=w(d(\zeta,E))$. Then 
$$\cD_\mu(\Omega) \lesssim \cI_1+\cI_2+\cI_3, $$
where 
\begin{align*}
\cI_1&:=\int_{0}^{\pi}\int_{0}^{\pi}\frac{(w(t)-w(t+s))^2}{s^2} \frac{\rho_{\mu,E}(t)}{t}{N}_E(t)dsdt,\\
\cI_2&=\int_0^{\pi} w'(t )^2\rho_{\mu,E}(2t)  N_E(t)dt,\\
\cI_3&:=\int_{t=0}^{\pi}\int_{s=t}^{\pi}\frac{(w(t)-w(t+s))^2}{s^2} \frac{\rho_{\mu,E}(s)}{s}{N}_E(t)dsdt,
\end{align*}
\end{lem}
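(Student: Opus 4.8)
The plan is to carry out the argument in parallel with the proof of Lemma~\ref{normf}, replacing the Richter--Sundberg identity (which is particular to analytic outer functions) by the elementary fact that $\dist(\cdot,E)$ is $1$-Lipschitz for the arc distance. Starting from
$$\cD_\mu(\Omega)=\frac1{2\pi}\int_\TT\int_\TT\frac{|\Omega(\zeta)-\Omega(\xi)|^2}{|\zeta-\xi|^2}\,|d\zeta|\,d\mu(\xi)$$
and using that $E$ has zero Lebesgue measure (so the $|d\zeta|$-integral only sees the components $I_n$ of $\TT\setminus E$), I would first record the pointwise bound: setting $a=d(\zeta,E)$, $b=d(\xi,E)$ and $m=\min(a,b)$, we have $|a-b|\le d(\zeta,\xi)$, hence, $w$ being decreasing, $|\Omega(\zeta)-\Omega(\xi)|=|w(a)-w(b)|\le w(m)-w(m+d(\zeta,\xi))$. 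This is the analogue of the triple integral used in the proof of Lemma~\ref{normf}, now without the logarithmic weight $w(y)/w(x)$, which was there only because $f_w$ was outer.

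Next I would split $\TT\times\TT$ according to whether $d(\xi,E)\le d(\zeta,E)$ (so $m=d(\xi,E)$) or $d(\zeta,E)<d(\xi,E)$ (so $m=d(\zeta,E)$); the first region will produce $\cI_1$ and the second $\cI_2$ and $\cI_3$. In the first region, for fixed $\xi$ the integral in $\zeta$, being an integral against $|d\zeta|$ of a function of $d(\zeta,\xi)$ only, reduces to a one-dimensional integral in the gap $s=d(\zeta,\xi)$ and is $\asymp G(d(\xi,E))$, where $G(t):=\int_0^\pi \frac{(w(t)-w(t+s))^2}{s^2}\,ds$. A one-line computation shows that $s\mapsto s^{-2}(w(t)-w(t+s))^2$ is nonincreasing (because $|w'|$ is), so $G$ is decreasing, and Lemma~\ref{lem11} then gives $\int_\TT G(d(\xi,E))\,d\mu(\xi)\lesssim \int_0^\pi G(t)\,\frac{\rho_{\mu,E}(t)}{t}N_E(t)\,dt=\cI_1$.

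In the second region I would set $t=d(\zeta,E)$ and split once more according to $s=d(\zeta,\xi)\le t$ or $s>t$. If $s\le t$ then $s^{-2}(w(t)-w(t+s))^2\le |w'(t)|^2$ (again $|w'|$ decreasing), while the $\mu$-mass of $\{\xi:d(\zeta,\xi)\le t\}$ is $\le \rho_{\mu,E}(2t)$, since that arc lies inside the arc of radius $2t$ centred at the point $\eta\in E$ with $d(\zeta,\eta)=t$; so this contribution is $\lesssim \int_\TT |w'(d(\zeta,E))|^2\rho_{\mu,E}(2d(\zeta,E))\,\frac{|d\zeta|}{2\pi}$, which, after the elementary reduction $\int_\TT g(d(\zeta,E))\,|d\zeta|=\int_0^\pi g(t)N_E(t)\,dt$, is exactly $\cI_2$. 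If $s>t$, the same geometric remark gives $\mu(\{\xi:d(\zeta,\xi)\le r\})\lesssim \rho_{\mu,E}(2r)$ for all $r\ge t$, so a dyadic estimate in the spirit of Lemma~\ref{lemFmu1}(4) bounds the $\xi$-integral by $\lesssim \int_t^\pi \frac{(w(t)-w(t+s))^2}{s^2}\,\frac{\rho_{\mu,E}(2s)}{s}\,ds$; the dilation $2s$ is harmless because the substitution $s\mapsto s/2$ that restores $\rho_{\mu,E}(s)$ only decreases the factor $(w(t)-w(t+s))^2$, whence this is $\lesssim \int_t^\pi \frac{(w(t)-w(t+s))^2}{s^2}\,\frac{\rho_{\mu,E}(s)}{s}\,ds$. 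Reducing the $\zeta$-integral as before yields $\cI_3$.

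The one genuinely delicate point, where I expect the real work to lie, is exactly this geometric book-keeping: $\rho_{\mu,E}(r)$ controls only the $\mu$-mass of arcs \emph{centred at a point of $E$}, so every time one meets an arc (or a symmetric difference of arcs) centred at some $\zeta\notin E$, one must enlarge it to an arc centred at the nearest point of $E$ at the cost of a bounded dilation of the radius, and then see that this dilation is absorbed either by the monotonicity and convexity of $w$ or by the very shape of $\cI_1,\cI_2,\cI_3$ (note that the argument $2t$ already appears in $\cI_2$), and moreover that the factors $\rho_{\mu,E}(\cdot)/(\cdot)$ and $N_E$ land in the correct positions. A last, routine point, handled as in the final sentence of the proof of Lemma~\ref{meshar}, is the contribution of the (few) components $I_n$ whose length is comparable to the scale under consideration, and of the diagonal ($\zeta$ or $\xi$ in $E$); both add only an extra term of the same type.
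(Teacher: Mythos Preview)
Your proposal is correct and follows essentially the same route as the paper: the same Lipschitz pointwise bound $|w(a)-w(b)|\le w(m)-w(m+d(\zeta,\xi))$, the same split according to which of $d(\zeta,E)$, $d(\xi,E)$ is smaller, the application of Lemma~\ref{lem11} to produce $\cI_1$, and the further near/far split (at scale $t=d(\zeta,E)$) together with the bound $\mu(\{d(\zeta,\xi)\le r\})\le\rho_{\mu,E}(2r)$ to produce $\cI_2$ and $\cI_3$. Your write-up is in fact more explicit than the paper's on the two points it leaves implicit (the monotonicity needed to invoke Lemma~\ref{lem11}, and the recentring of arcs at the nearest point of $E$), so nothing further is required.
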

\begin{proof} 
Set $\delta=d(\zeta,E)$ and $\delta'=d(\zeta',E)$. By Lemma \ref{lem11} we have
\begin{eqnarray*}
\cJ_1&=&\frac{1}{2\pi}\int_\TT\int_{\delta'\leq \delta}\frac{(w(\delta)-w(\delta'))^2}{|\zeta-\zeta'|^2} |d\zeta|d\mu(\zeta')\\
&\lesssim& \int_\TT\int_{\delta'\leq \delta}\frac{(w(\delta')-w(\delta'+|\zeta-\zeta'|))^2}{|\zeta-\zeta'|^2}|d\zeta| d\mu(\zeta')\\
&\lesssim&\int_\TT\int_{s=0}^{\pi}\frac{(w(\delta')-w(\delta'+s))^2}{s^2} ds d\mu(\zeta') \\
&\lesssim&\int_{0}^{\pi}\int_{0}^{\pi}\frac{(w(t)-w(t+s))^2}{s^2} \frac{\rho_{\mu,E}(t)}{t}{N}_E(t)dsdt,
\end{eqnarray*}
and
\begin{eqnarray*}
\cJ_2&=&\frac{1}{2\pi}\int_\TT\int_{\delta\leq \delta'}\frac{(w(\delta)-w(\delta'))^2}{|\zeta-\zeta'|^2} d\mu(\zeta')|d\zeta|\\
&\lesssim& \int_\TT\int_{\TT}\frac{(w(\delta)-w(\delta+|\zeta-\zeta'|))^2}{|\zeta-\zeta'|^2} d\mu(\zeta')|d\zeta|\\
&\lesssim&\int_{\zeta \in \TT}\int_{| \zeta '-\zeta  | \leq \delta}+ \int_{\zeta \in \TT}\int_{ |\zeta '-\zeta | \geq \delta}=\cJ_{21}+\cJ_{22} .
\end{eqnarray*}
Clearly we have
$$\cJ_{21}\lesssim  \int_\TT  w'( \delta)\rho_{\mu,E}(2\delta)  |d\zeta|
\lesssim  \int_0^{\pi}  w'(t )^2\rho_{\mu,E}(2t)  N_E(t)dt.
$$
With the same calculation, as in Lemma \ref{lem11}, we have
$$
\cJ_{22} \lesssim  \int_{t=0}^{\pi}\int_{s=t}^{\pi}\frac{(w(t)-w(t+s))^2}{s^2} \frac{\rho_{\mu,E}(s)}{s}{N}_E(t)dsdt. 
$$
Since $\cD_\mu(w)=\cJ_1+\cJ_2$, we get our result.
\end{proof}

For a positive increasing function $\psi$ such that $\psi (0)=0$, we set \begin{equation}
\label{concave-convexe}
M_{\psi,E}(s)= \max\Big(\int_{0}^{s}\frac{{\psi(t)}} {t}N_E(t)dt,\; \frac{\psi(s)}{s}|E_s|\Big),
\end{equation}
where  $E_t=\{\zeta\in E\text{ : } \dist(\zeta,E)\leq t\}$. If $\psi$ is concave, then $\psi(x)/x$ is decreasing and 
$$\frac{\psi(s)}{s}|E_s|\leq  \frac{\psi(s)}{s}\int_{0}^{s}N_E(t)dt\leq \int_{0}^{s}\frac{{\psi(t)}} {t}N_E(t)dt=M_{\psi,E}(s) .$$
And if $\psi$ is convex then  $\psi(x)/x$ is increasing, so 
$$\int_{0}^{s}\frac{{\psi(t)}} {t}N_E(t)dt\leq \frac{\psi(s)}{s}\int_{0}^{s}N_E(t)dt=\frac{\psi(s)}{s}|E_s|=M_{\psi,E}(s).$$

The function $\psi$ is called $\alpha$--admissible if $\psi$ is concave or convex and $\psi(s)/s^\alpha$ is decreasing for some $\alpha>0$.  
Now we can state the main result of this subsection

\begin{theo}\label{thhar} Let $E$ be a closed subset of $\TT$. Let $w$ be a convex decreasing function and let  $\Omega(\zeta)=w(d(\zeta,E))$. Suppose that there exists an $\alpha$--admissible function $\psi$, with $\alpha<2$, such that  $\rho_{\mu,E}(s)\leq \psi(s)$.  Then 
$$\cD_\mu(\Omega)\leq C ({\alpha })\| w'M_{\psi,E} \|_{\infty} \| w \|_{\infty}.$$
\end{theo}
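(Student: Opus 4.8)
The plan is to estimate each of the three integrals $\cI_1,\cI_2,\cI_3$ from Lemma \ref{lem22} separately, exploiting the hypothesis $\rho_{\mu,E}(s)\le\psi(s)$ together with the $\alpha$-admissibility of $\psi$. Throughout I would repeatedly use the elementary bound $|w(t)-w(t+s)|\le s|w'(t)|$ when $s\le t$ (valid since $w$ is decreasing and convex, so $|w'|$ is decreasing), and $|w(t)-w(t+s)|\le 2\|w\|_\infty$ always. The key structural observation is that $\rho_{\mu,E}(t)N_E(t)/t$ is, up to the admissibility bound, controlled by either $\psi(t)N_E(t)/t$ or $M_{\psi,E}$, and that $M_{\psi,E}$ is precisely the ``right'' majorant that dominates both $\int_0^s(\psi(t)/t)N_E(t)dt$ and $(\psi(s)/s)|E_s|$.

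First I would handle $\cI_2=\int_0^\pi w'(t)^2\rho_{\mu,E}(2t)N_E(t)\,dt$. Here I bound $\rho_{\mu,E}(2t)\le\psi(2t)\asymp\psi(t)$ (using that $\psi(s)/s^\alpha$ is decreasing, so $\psi(2t)\le 2^\alpha\psi(t)$), giving $\cI_2\lesssim\int_0^\pi |w'(t)|\cdot|w'(t)|\psi(t)N_E(t)\,dt$. Since $\psi(t)/t\cdot N_E(t)$ integrated up to $t$ is at most $M_{\psi,E}(t)$, I would integrate by parts or simply write $\psi(t)N_E(t)=t\cdot(\psi(t)/t)N_E(t)$ and use $\int_0^s(\psi(t)/t)N_E(t)dt\le M_{\psi,E}(s)$ together with monotonicity of $|w'|$; the outcome should be $\cI_2\lesssim\|w'M_{\psi,E}\|_\infty\|w\|_\infty$ after using $\int|w'|\le\|w\|_\infty$. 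For $\cI_1$, I split the inner $s$-integral at $s=t$: on $s\le t$ use $(w(t)-w(t+s))^2/s^2\le w'(t)^2$ and integrate in $s$ to get a factor $\lesssim t\,w'(t)^2$, reducing to essentially the same integral as $\cI_2$; on $s\ge t$ (inside $\cI_1$) use $(w(t)-w(t+s))^2/s^2\le 4\|w\|_\infty|w(t)-w(t+s)|/s^2\lesssim \|w\|_\infty |w'(t)|t/s$ ... more carefully $|w(t)-w(t+s)|\le|w(t)-w(2t)|\lesssim t|w'(t)|$ for $s\ge t$, so the term is $\lesssim\|w\|_\infty t|w'(t)|/s^2$, and $\int_t^\pi ds/s^2\lesssim1/t$, again collapsing to the $\cI_2$-type bound.

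The remaining and slightly more delicate term is $\cI_3=\int_0^\pi\int_t^\pi\frac{(w(t)-w(t+s))^2}{s^2}\frac{\rho_{\mu,E}(s)}{s}N_E(t)\,ds\,dt$, where the modulus is evaluated at the \emph{large} variable $s$. I would again use $|w(t)-w(t+s)|\le|w(t)-w(2t)|\lesssim t|w'(t)|$ on the region $s\ge t$, bound $\rho_{\mu,E}(s)\le\psi(s)$, and then face $\int_0^\pi t^2w'(t)^2 N_E(t)\Big(\int_t^\pi\frac{\psi(s)}{s^3}ds\Big)dt$. Here the admissibility $\psi(s)/s^\alpha$ decreasing with $\alpha<2$ is exactly what makes $\int_t^\pi\psi(s)s^{-3}ds\le\psi(t)t^{-\alpha}\int_t^\pi s^{\alpha-3}ds\le C(\alpha)\psi(t)/t^2$ converge with the right power, so the inner integral is $\lesssim_\alpha\psi(t)/t^2$ and $\cI_3\lesssim_\alpha\int_0^\pi w'(t)^2\psi(t)N_E(t)\,dt$, which is the $\cI_2$-type integral once more. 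Assembling the three pieces and invoking the bound $\int_0^s(\psi(t)/t)N_E(t)dt\le M_{\psi,E}(s)$ to convert $\int_0^\pi w'(t)^2\psi(t)N_E(t)dt$ into $\|w'M_{\psi,E}\|_\infty\int_0^\pi|w'(t)|dt\le\|w'M_{\psi,E}\|_\infty\|w\|_\infty$ finishes the proof. The main obstacle I anticipate is organizing the Fubini interchanges and the conversion step cleanly: making sure that in each integral, after bounding the $w$-difference, one genuinely recovers a quantity of the form $\int |w'(t)|\cdot\big(\text{something}\le M_{\psi,E}(t)\big)\,dt$ so that $\|w'M_{\psi,E}\|_\infty$ can be pulled out — the admissibility of $\psi$ (concave or convex, and $\psi(s)/s^\alpha\downarrow$ with $\alpha<2$) has to be used at precisely the right moments, both for the $s\ge t$ convergence in $\cI_3$ and for the doubling estimates $\psi(2t)\asymp\psi(t)$.
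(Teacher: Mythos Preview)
Your treatment of $\cI_2$ and of the range $s\le t$ in $\cI_1$ is fine, and your final conversion step $\psi(t)N_E(t)\le M_{\psi,E}(t)$ is correct (it follows in both the concave and convex cases from $N_E$ being decreasing). The problem is the inequality you invoke on the range $s\ge t$:
\[
|w(t)-w(t+s)|\le |w(t)-w(2t)|\qquad (s\ge t).
\]
This is backwards. Since $w$ is decreasing, $s\mapsto w(t)-w(t+s)$ is \emph{increasing}, so for $s\ge t$ one has $w(t)-w(t+s)\ge w(t)-w(2t)$, not $\le$. (Take $w$ that drops from $1$ to $0$ on a short interval near $a$: for $t<a$ we have $t|w'(t)|=0$ while $w(t)-w(t+s)=1$ for suitable $s\ge t$.) This single mistake invalidates your bounds for the $s\ge t$ part of $\cI_1$ and for all of $\cI_3$; the crude alternatives $|w(t)-w(t+s)|\le s|w'(t)|$ or $\le \|w\|_\infty$ do not by themselves close the estimate either, as they leave a non-integrable factor $1/t$ or $\log(\pi/t)$.

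What actually works for $\cI_3$ is to expand the square as a double integral,
\[
(w(t)-w(t+s))^2=\int_t^{t+s}\!\!\int_t^{t+s}|w'(u)||w'(v)|\,du\,dv,
\]
use $t+s\le 2s$ and apply Fubini: integrate $N_E(t)\,dt$ over $t\le u$ (symmetrize so $u\le v$) to produce $|E_u|$, and integrate $\psi(s)s^{-3}$ over $s\ge v/2$ to produce $\psi(v)/v^2$. This gives
\[
\cI_3\ \lesssim\ \int_0^\pi |w'(v)|\,\frac{\psi(v)}{v^2}\int_0^{v}|w'(u)|\,|E_u|\,du\,dv.
\]
Now the admissibility $\psi(s)/s^\alpha\downarrow$ lets you transfer the weight from $v$ to $u$: write $\psi(v)/v^2=(\psi(v)/v^\alpha)\,v^{\alpha-2}\le (\psi(u)/u^\alpha)\,v^{\alpha-2}$ for $u\le v$, pull out $\sup_u |w'(u)|\,(\psi(u)/u)\,|E_u|\le \|w'M_{\psi,E}\|_\infty$, and the remaining $\int_0^\pi |w'(v)|\,v^{\alpha-2}\int_0^v u^{1-\alpha}du\,dv=C(\alpha)\int_0^\pi|w'(v)|\,dv\le C(\alpha)\|w\|_\infty$ converges precisely because $\alpha<2$. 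Note that it is the second branch $(\psi(u)/u)|E_u|$ of $M_{\psi,E}$ that appears here; this is why $M_{\psi,E}$ is defined as a maximum, and your reduction of $\cI_3$ to an ``$\cI_2$-type'' integral involving only $\psi(t)N_E(t)$ would miss this. The large-$s$ piece of $\cI_1$ is handled by an analogous expansion and Fubini (parallel to the treatment of $\cJ_1$ in Theorem~\ref{coronorm}).
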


\begin{proof} 

We apply lemma \ref{lem22}. An analogue  calculation, as in the proof of Theorem \ref{coronorm}, gives 
$$\cI_1+\cI_2\lesssim  \| w'M_{\rho_{\mu,E},E} \|_{\infty} \| w \|_{\infty}.$$
Now we consider the integral $\cI_3$.
We have 
\begin{eqnarray*}
\cI_3&=&2 \int_{t=0}^{\pi}\int_{s=t}^{\pi}\int _{u=t}^{t+s}\int _{v=t}^{t+s}w'(u)w'(v) \frac{\rho_{\mu,E}(s)}{s^3}{N}_E(t)dvdudsdt\\
&\lesssim&\int_{t=0}^{\pi}\int_{s=t}^{\pi}\int _{u=t}^{2s}\int _{v=t}^{2s}|w'(u)||w'(v)| \frac{\rho_{\mu,E}(s)}{s^3}{N}_E(t)dvdudsdt\\
&\lesssim&\int_{v=0}^{\pi} |w'(v)|\frac{\psi(v)}{v^2}\int_{u=0}^{v} |w'(u)| |E_u|dudv\\
&\lesssim&\int_{v=0}^{\pi} |w'(v)|\frac{1}{v^{2-\alpha}}\int_{u=0}^{v} |w'(u)| \frac{\psi(u)}{u^\alpha}|E_u|dudv\\
&\leq &C(\alpha)\sup _u \Big ( |w'(u)|\frac{\psi(u)}{u} |E_u|\Big )\int_{v=0}^{\pi} |w'(v)|\frac{1}{v^{2-\alpha}}v^{2-\alpha}dv\\
&\leq &C(\alpha)\sup _u \Big (| w'(u)|\frac{\psi(u)}{u} |E_u|\Big ) \| w \|_{\infty}.
\end{eqnarray*}
\end{proof}

\begin{coro}Let $E$ be a closed subset of $\TT$. Let $w$ be a convex decreasing function and let  $\Omega(\zeta)=w(d(\zeta,E))$. Suppose that  $\rho_{\mu,E}(t)=O(t^\alpha)$ for some $\alpha >0$.  Then 
\begin{enumerate}
\item  $\displaystyle \cD_\mu(\Omega)\leq C(\alpha)\sup_{t\geq 0}\Big|w'(t)\int_{t}^{\pi} t^{\alpha-1}N_E(t)dt\Big|  \| w \|_{\infty}$,   if $0<\alpha\leq 1$, 
\item $\displaystyle \cD_\mu(\Omega)\leq C(\alpha)\sup_{t\geq 0}\big| w'(t)t^{\alpha-1}|E_t|\big|  \| w \|_{\infty}$,    if $1\leq \alpha< 2$,
\item $\displaystyle \cD_\mu(\Omega)\leq C(\alpha)\sup_{t\geq 0}\big| w'(t) |\log t||E_t|\big|  \| w \|_{\infty}$,   if $\alpha= 2$,
\item   $\displaystyle \cD_\mu(\Omega)\leq C(\alpha,h)\sup_{t\geq 0}\big| w'(t) h(t)|E_t|\big|  \| w \|_{\infty}$,  if $\alpha>2$, where $h$ is a positive increasing function  such that $h(0)=0$ and $\int_0ds/h(s)<\infty$.\end{enumerate}
\end{coro}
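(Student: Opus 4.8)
The plan is to derive each of the four statements from Theorem \ref{thhar} by choosing, in each case, an appropriate $\alpha$-admissible majorant $\psi$ for $\rho_{\mu,E}$ and then simplifying the quantity $M_{\psi,E}$ that appears in the conclusion of that theorem. Recall that $M_{\psi,E}(s)=\max\bigl(\int_0^s(\psi(t)/t)N_E(t)\,dt,\;(\psi(s)/s)|E_s|\bigr)$, and that for $\psi$ concave the first term dominates while for $\psi$ convex the second term dominates; thus the real work is just picking $\psi(s)=s^\alpha$ (or a slight variant) and observing which branch of the max is active.

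First I would treat cases (1) and (2), which are the cleanest. For (1), $0<\alpha\le 1$, the function $\psi(s)=s^\alpha$ is concave (and $\psi(s)/s^\alpha\equiv1$ is trivially decreasing, so $\psi$ is $\alpha$-admissible), and since $\rho_{\mu,E}(s)=O(s^\alpha)$ we may take $\psi(s)=Cs^\alpha$. In the concave case $M_{\psi,E}(s)=\int_0^s(\psi(t)/t)N_E(t)\,dt=C\int_0^s t^{\alpha-1}N_E(t)\,dt$. Plugging into Theorem \ref{thhar} gives $\cD_\mu(\Omega)\le C(\alpha)\|w'\cdot\int_{\,\cdot\,}^{?}\|_\infty\|w\|_\infty$; one small point to check is that the theorem's $M_{\psi,E}(s)=\int_0^s$ while the statement of (1) has $\int_t^\pi$, so I would note that $\int_0^s t^{\alpha-1}N_E(t)\,dt$ and $\int_t^\pi t^{\alpha-1}N_E(t)\,dt$ are interchangeable up to the constant from the full integral $\int_0^\pi$ being finite (or, more carefully, that only the behavior near $0$ matters since $w'$ is what multiplies it and $N_E$ is bounded), which is exactly the kind of adjustment already used implicitly in Theorem \ref{coronorm}. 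For (2), $1\le\alpha<2$, now $\psi(s)=s^\alpha$ is convex and still $\alpha$-admissible, so the max is realized by $(\psi(s)/s)|E_s|=s^{\alpha-1}|E_s|$, and Theorem \ref{thhar} immediately yields $\cD_\mu(\Omega)\le C(\alpha)\sup_t|w'(t)t^{\alpha-1}|E_t||\,\|w\|_\infty$.

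For (3), $\alpha=2$, the obstacle is that $\psi(s)=s^2$ is not $2$-admissible in the sense required ($\alpha<2$ is needed in Theorem \ref{thhar}). The fix is to choose $\psi(s)=s^2/|\log s|$ (for small $s$, suitably extended), which still dominates a constant times $s^2$ up to the logarithmic factor — wait, it does not dominate $s^2$; rather one uses that $\rho_{\mu,E}(s)=O(s^2)=O(s^2/|\log s|\cdot|\log s|)$ and picks $\psi(s)=Cs^{2-\varepsilon}$? The cleaner route: take $\psi(s)=s^{\beta}$ for any $\beta<2$; but $s^\beta$ does not majorize $s^2$ near $0$. Instead I would take $\psi(s)=C s^2/|\log s|$, which for small $s$ exceeds $C's^2\ge\rho_{\mu,E}(s)$, is $\alpha$-admissible with any $\alpha\in(1,2)$ since $\psi(s)/s^\alpha=Cs^{2-\alpha}/|\log s|\to0$ is eventually decreasing, and is convex near $0$; then $M_{\psi,E}(s)=(\psi(s)/s)|E_s|=Cs|E_s|/|\log s|$, and the factor $1/v^{2-\alpha}$ in the proof of Theorem \ref{thhar} combines with $\psi(u)/u^\alpha=Cu^{2-\alpha}/|\log u|$ to give the integrable weight $\int_0 dv/(v|\log v|\cdot v^{\alpha-2}\cdots)$ — I would reproduce the chain of inequalities in the proof of Theorem \ref{thhar} with this $\psi$, getting $\sup_t|w'(t)|\log t||E_t||$ as claimed. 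This logarithmic borderline is where I expect the main bookkeeping difficulty. For (4), $\alpha>2$, similarly $\psi(s)=s^2 h(s)/? $; here one uses the hypothesis $\int_0 ds/h(s)<\infty$: take $\psi$ to be $\alpha$-admissible with $1<\alpha<2$ and $\psi(s)\ge \rho_{\mu,E}(s)$, e.g. $\psi(s)=C s^2/h(1/s)$-type, so that the weight $\int_0 dv/h(v)<\infty$ makes the final $v$-integral in the proof of Theorem \ref{thhar} converge, producing $\cD_\mu(\Omega)\le C(\alpha,h)\sup_t|w'(t)h(t)|E_t||\,\|w\|_\infty$.

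The main obstacle, as indicated, is cases (3) and (4): Theorem \ref{thhar} is stated only for $\alpha<2$, so for $\alpha\ge2$ one cannot take $\psi(s)=s^\alpha$ directly and must instead interpolate with a borderline-admissible $\psi$ (a logarithmic correction for $\alpha=2$, and an $h$-correction for $\alpha>2$) and then re-run — or invoke with the appropriate $\psi$ — the estimate of $\cI_3$ from the proof of Theorem \ref{thhar}, checking that the resulting power/log weight in the outer $v$-integral is integrable on $(0,\pi)$. Cases (1) and (2) are then essentially immediate specializations of Theorem \ref{thhar} together with the observation, already noted above, that in the concave case $M_{\psi,E}$ is the $\int_0^s$-integral and in the convex case it is $(\psi(s)/s)|E_s|$.
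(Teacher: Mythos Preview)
Your handling of (1) and (2) is correct and matches the paper: both are direct consequences of Theorem~\ref{thhar} with $\psi(s)=s^\alpha$, the concave/convex dichotomy selecting the right branch of $M_{\psi,E}$. (The $\int_t^\pi$ versus $\int_0^t$ issue you flag in (1) is a typo in the statement; the paper's own proof simply says ``direct consequence'', so what Theorem~\ref{thhar} actually outputs, namely $\int_0^t s^{\alpha-1}N_E(s)\,ds$, is what is meant.)

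For (3) and (4) your search for an admissible $\psi$ goes off the rails. Your claim that ``$s^\beta$ does not majorize $s^2$ near $0$'' is false: for $0<s<1$ and $\beta<2$ one has $s^\beta>s^2$. And your alternative $\psi(s)=Cs^2/|\log s|$ fails to dominate $s^2$, as you yourself notice mid-sentence. One \emph{could} apply Theorem~\ref{thhar} with $\psi(s)=s^\beta$, $\beta<2$, but the constant $C(\beta)$ blows up as $\beta\to 2$, so the $|\log t|$ bound does not fall out.

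The paper does what you mention only in your final paragraph and never carry out: it abandons the search for $\psi$ and returns to the line in the proof of Theorem~\ref{thhar} just before admissibility is used,
\[
\cI_3\;\lesssim\;\int_{v=0}^{\pi} |w'(v)|\,\frac{\psi(v)}{v^2}\int_{u=0}^{v} |w'(u)|\,|E_u|\,du\,dv,
\]
where now the factor $\psi(v)/v^2$ arose from $\int_{v/2}^\pi \rho_{\mu,E}(s)s^{-3}\,ds$. Insert $\rho_{\mu,E}(s)\le Cs^\alpha$ directly into that $s$-integral: for $\alpha=2$ it gives $\log(1/v)$, for $\alpha>2$ it gives a constant $C(\alpha)$. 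In the first case one obtains
\[
\cI_3\lesssim \int_0^\pi |w'(v)|\log\tfrac{1}{v}\int_0^v|w'(u)|\,|E_u|\,du\,dv
\]
and (3) follows by bounding $\int_0^v|w'(u)||E_u|\,du\le |E_v|\|w\|_\infty$. In the second case one inserts $h(u)/h(u)$, pulls out $\sup_u|w'(u)|h(u)|E_u|$, and uses $\int_0 du/h(u)<\infty$ to get (4). The terms $\cI_1,\cI_2$ need no new work: the proof of Theorem~\ref{thhar} already bounds them by $\|w'M_{\rho_{\mu,E},E}\|_\infty\|w\|_\infty$ without invoking the $\alpha<2$ hypothesis.
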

\begin{proof} (1) and (2) are direct consequences of Theorem \ref{thhar}. \\
Now we prove (3). The proof of Theorem \ref{thhar} gives
$$\cI_3\lesssim  \int_{v=0}^{\pi} |w'(v)|\log \frac{1}{v}\int_{u=0}^{v} |w'(u)| |E_u|dudv,$$
and we get our estimate.\\
Finally we prove (4).
 Since $|E_t|\to 0$, then there exists a  positive increasing function $h$, $h(0)=0$,  such that $\int_0ds/h(s)<\infty$.  Again, by the proof of Theorem \ref{thhar} we have 
$$\cI_3\lesssim  C(\alpha)\int_{v=0}^{\pi} |w'(v)|\int_{u=0}^{v} \frac{h(u)}{h(u)} |w'(u)| |E_u|dudv, $$
which gives  the desired estimate.
\end{proof}


\section{ kernel estimate}
In this section we will prove  Theorem \ref{thkernel}.  The reproducing kernel $k^\mu$ of $  \cD(\mu)$ is defined by,
$$f(z)=\langle f,k^\mu(\cdot,z)\rangle,\qquad f\in   \cD(\mu),\quad z\in \DD.$$
So,   
    \begin{equation}
  \label{kernelid}
  k^\mu(z,z)=\sup\{|f(z)|^2\text{ : } f\in \cD(\mu),\;  \|f\|_\mu ^2\leq 1\}.
  \end{equation}
It follows obviously that for $|z|\leq1/2$ we have 
$$
  k^\mu(z,z)\asymp 1+ \int_{0}^{|z|} \frac{dr}{(1-r)P[\mu](rz/|z|)+(1-r)^2}.
$$
By Littlewood--Paley identity, we have  
\begin{eqnarray*}
  \|f\|^2_\mu&= &\|f\|^2_{H^2}+ \cD _\mu(f)\\
  &=& |f(0)|^2+\displaystyle \int_\DD|f'(w)|^2|[|\log{|w|}|+P[\mu](w)]dA(w)\\
  &\asymp & |f(0)|^2+\displaystyle \int_\DD|f'(w)|^2|[(1-|w|)+P[\mu](w)]dA(w).
\end{eqnarray*}
Let $f\in \cD(\mu)\backslash\{0\}$ and let $f=I_fO_f$ be the inner--outer factorization of $f$. Then  $O_f\in \cD(\mu)$ and $\cD_\mu(O_f)\leq \cD_\mu(f)$ (see \cite{RS1}). Thus by  \eqref{kernelid}, we get 
 \begin{equation}\label{KernelO}
  {k^\mu(z,z)}=\sup\{|f(z)|^2\text{ : } f\in \cD(\mu) \text{ outer function and} \;  \|f\|_\mu\leq 1\}.
 \end{equation}
This observation will be useful in the proof of the lower estimate. 

 \subsection{Proof of the upper estimate}  
Let $z=\rho \in [1/2, 1)$. By  Lemma \ref{lemFmu1}, it suffices to prove that 
$$k^\mu(\rho,\rho )\lesssim 1+ \int_{1-\rho}^1 \frac{dx}{F_\mu (x)+x^2}.$$
Let $f\in \cD(\mu)$, since  $f\in H^2$, 
  $$|f(iy)|\leq \frac{\|f\|_{H^2}}{\sqrt{1-y}}\leq \sqrt{2}\|f\|_{H^2}, \qquad 0 <y<1/2.$$
  So, for $ 0 <y<1/2$,  we have 
  $$
|  f(\rho+i(1-\rho) y)|=\Big|f(iy)+\int_{0}^{\rho}f'(t+i(1-t)y)dt\Big|
  \lesssim\int_{0}^{\rho}|f'(t+i(1-t)y)|dt+ \|f\|_{H^2}.
  $$
  
  Let  $\Delta$ be the triangle   with vertices $-i/2$, $1$, $i/2$ and let 
   $\Delta_\rho=\{x+iy\in \Delta\text{ : } 0\leq x \leq \rho\}$. 
By change of variables $w=u+iv=t+i(1-t)y$, we get  
\begin{eqnarray}\label{estimation-kernel}
  \frac1{1-\rho}\int_{-(1-\rho)/2}^{(1-\rho)/2}|f(\rho+i\eta)|d\eta&=&
     \int_{-1/2}^{1/2}|f(\rho+i(1-\rho) y)|dy\nonumber  \\
     &\lesssim& \int_{0}^{\rho}\int_{-1/2}^{1/2}|f'(t+i(1-t)y)|dydt+ 
\|f\|_{H^2}\nonumber \\
&\lesssim& \int_{\Delta_\rho}|f'(w)|\frac{dudv}{1-u}+ 
\|f\|_{H^2}\nonumber\\
&\lesssim& \cD_\mu(f)^{1/2}\Big[\int_{\Delta_\rho} \frac{dA(w)}{(1-u)^2((1-|w|)+P[\mu](w))}\Big]^{1/2}+ 
\|f\|_{H^2}\nonumber\\
&\lesssim& \cD_\mu(f)^{1/2}\Big[\int_{0}^{\rho} \frac{du}{(1-u)^2+(1-u)P[\mu](u)}\Big]^{1/2}+ 
\|f\|_{H^2}\nonumber\\
&\lesssim & \cD_\mu(f)^{1/2}\Big[\int_{1-\rho}^{1} \frac{dx}{F_\mu(x)+x^2} \Big]^{1/2}+ 
\|f\|_{H^2}.
\end{eqnarray}
Denote by $D(\lambda,r)$ the disc of radius $r$ centered at $\lambda$.  Since   
$$D(\rho,(1-\rho)/4)\subset \{z=x+iy\text{ : } |x-\rho|\leq (1-\rho)/4 \text{ and }  |y|\leq (1-x)/4\},$$
 by \eqref{estimation-kernel}  and the subharmonicity  of $|f|$ we obtain
  \begin{eqnarray}\label{eqkernel1}
|f(\rho)|
&  \lesssim&\frac{1}{(1-\rho)^2} \int_{x=\rho-\frac{1-\rho}{4}}^{\rho+\frac{1-\rho}{4}}\Big(\int_{y=-\frac{1-x}{2}}^{\frac{1-x}{2}}|f(x+iy) dy\Big)dx\nonumber\\
&\lesssim &\cD_\mu(f)^{1/2} \Big[\int_{\frac{5}{4}(1-\rho)}^1\frac{dx}{F_\mu(x)+x^2}\Big]^{1/2}+\|f\|_{H^2} 
\end{eqnarray}
Now  from \eqref{kernelid}, we get    
$$k^\mu(\rho,\rho )\lesssim1+ \int_{1-\rho}^1 \frac{dx}{F_\mu (x)+x^2} \asymp  1+ \int_{0}^{|z|} \frac{dr}{(1-r)P[\mu](rz/|z|)+(1-r)^2}.$$

\subsection{Lower estimate}

\subsubsection{Weighted Sobolev spaces} 
In this subsection we introduce weighted Sobolev spaces which will be used in the proof of lower estimate of the norm of the kernel of $\cD(\mu)$ and in the proof of Theorem \ref {capdirichlet}. Let $\varphi$ be a nondecreasing continuous function. The Sobolev space associated to $\varphi$, $W^{\infty}(\varphi)$, consists of real continuous functions $f$ on $]0,{2\pi}]$ given by \begin{equation}\label{representation} 
f(x) = f(2\pi)+\displaystyle \int _x^{2\pi} g(s)ds, \qquad 0<x \leq 2\pi,
\end{equation}
where $g$ is a measurable function satisfying 
$$||g\varphi ||_{\infty}=\sup_{0<x} |g(x)|\varphi (x)<\infty$$ 
 As usual $g$ will be denoted by $f'$. Equipped with the following norm 
 $$\| f\|_{W^{\infty}(\varphi )}= \|f'\varphi \|_{\infty},$$
  $W^{\infty}(\varphi)$ is a Banach space. It becomes a topological Banach algebra, if and only if, 
$$
\displaystyle \int _{{0}}\frac{dx}{\varphi(x)}<\infty . 
$$
We say that $f$ is regular, and write $f \in {\cal R}$, if $f$ is a $C^1$ convex decreasing function on $[0,2\pi]$, satisfying $f(2t) \leq 2f(t)$ and $t^2|f'(t)|$ is increasing.\\

Our goal is to estimate 
$$ \gamma_\varphi (a)=  \sup \big\{f^2(a): f\in  W^{\infty}(\varphi)\cap \cR, \; \| f\|_{W^{\infty}(\varphi)}\|f\|_{\infty}+\|f\|_2^2\leq 1\big\}.$$
First we will examine the hilbertian case 
 $$
W^2(\varphi)= \big\{f \ \text{of the form (\ref{representation}})\ : \|f\|_{W^2(\varphi)}^2=\|f\|_2^2+\displaystyle \int _{{0}}^{2\pi} |f'(t)|^2\varphi (t)dt <\infty \big\}.
$$
We also need the following subspace of $W^2(\varphi)$
$$
W_0^2(\varphi)= \big\{f \ \text{of the form (\ref{representation}})\ : f(2\pi ) =0, \ \mbox{and}\ \|f\|_{W_0^2(\varphi)}^2=\displaystyle \int _{{0}}^{2\pi} |f'(t)|^2\varphi (t)dt <\infty \big\}.
$$
Clearly, evaluations at points of $ ]0,2\pi]$ define continuous linear functionals on $W^2(\varphi)$ and on $W_0^2(\varphi)$.  Let $K_\varphi,  L_\varphi$ be the reproducing kernels of $W^2(\varphi)$ and of $W_0^2(\varphi)$ respectively.
One can give, with an elementary calculation,  the expression of the reproducing kernel $L_{\varphi}$  of $W_0^2(\varphi)$. Indeed we have
$$
L_{\varphi}(t,s)=\left\{
\begin{array}{lll}
\displaystyle \int _{{t}}^{2\pi} \frac{dx}{\varphi (x)}\qquad & t\geq s,\\
&\\
L_{\varphi}(s,s)\qquad& t\leq s.\\
\end{array}
\right.
$$
The estimates of $K_\varphi,$ on the diagonal is given by
$$
K_{\varphi}(a,a)\asymp 1+L_\varphi (a,a)= 1+\displaystyle \int _{{a}}^{2\pi} \frac{dx}{\varphi (x)}.
$$
It means that
$$
  \sup \big\{f(a)^2 \text{ : } f \in  W^{2}(\varphi) \text{ , }  \|f\|^2_{W^2(\varphi)}\leq 1\big\}\asymp 1+\displaystyle \int _{{a}}^{2\pi} \frac{dx}{\varphi (x)}.
$$
The following proposition will be used several times in what follows.
\begin{prop}\label{sobolev}
Suppose that $t^2/\varphi(t)$ is increasing and $\varphi (t) \geq t^2$.  Let $a<1/2$, we have
$$
\gamma_\varphi (a) \asymp K_{\varphi}(a,a).
$$
\end{prop}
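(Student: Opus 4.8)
\textbf{Proof plan for Proposition \ref{sobolev}.} The statement asserts $\gamma_\varphi(a)\asymp K_\varphi(a,a)$, where the left side is a sup over the much smaller class $W^\infty(\varphi)\cap\cR$ of functions (normalized so that $\|f\|_{W^\infty(\varphi)}\|f\|_\infty+\|f\|_2^2\le 1$) and the right side is the $W^2(\varphi)$ reproducing kernel, whose diagonal is $1+\int_a^{2\pi}dx/\varphi(x)$. The plan is to prove the two inequalities separately. The easy direction is $\gamma_\varphi(a)\lesssim K_\varphi(a,a)$: if $f\in W^\infty(\varphi)\cap\cR$ with $\|f'\varphi\|_\infty\|f\|_\infty+\|f\|_2^2\le 1$, then pointwise $|f'(t)|^2\varphi(t)=|f'(t)|\varphi(t)\cdot|f'(t)|\le \|f'\varphi\|_\infty|f'(t)|$, so after integrating and using that $f$ is monotone (hence $\int_0^{2\pi}|f'(t)|\,dt=f(0^+)-f(2\pi)\le 2\|f\|_\infty$ — here one needs $f(0^+)$ finite, which follows because $f\in\cR$ forces $t^2|f'(t)|$ bounded, hence $|f'(t)|=O(t^{-2})$... careful, that is not integrable; instead use $t|f'(t)|\lesssim f(t)\le\|f\|_\infty$ as established in the proof of Theorem \ref{coronorm}, giving $\int_0^{2\pi}|f'|\lesssim\|f\|_\infty\int_0^{2\pi}dt/t$, which diverges). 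So instead I would argue: $\int_0^{2\pi}|f'(t)|^2\varphi(t)\,dt\le\|f'\varphi\|_\infty\int_0^{2\pi}|f'(t)|\,dt$ is the wrong split; rather bound $\int|f'|^2\varphi=\int(|f'|\varphi)(|f'|)\le\|f'\varphi\|_\infty^{?}$... The correct clean split is $|f'|^2\varphi\le\|f'\varphi\|_\infty\cdot|f'|$ but weighted by nothing — so one genuinely needs $|f'|\in L^1$. This is guaranteed when $\int_0 dx/\varphi(x)<\infty$; when that integral diverges, $K_\varphi(a,a)$ is already comparable to $\int_a^{2\pi}dx/\varphi(x)$ which is finite for $a>0$, and one uses instead the representation $f(a)=f(2\pi)+\int_a^{2\pi}f'$ with Cauchy–Schwarz against $dx/\varphi(x)$ directly, exactly as in the computation of $K_\varphi(a,a)$ itself. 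So the upper bound $\gamma_\varphi(a)\le\sup\{f(a)^2:\|f\|_{W^2(\varphi)}^2\le 1\}\asymp K_\varphi(a,a)$ follows because the normalization in $\gamma_\varphi$ dominates $\|f\|_{W^2(\varphi)}^2$ up to an absolute constant: $\|f'\|_{L^2(\varphi)}^2\le\|f'\varphi\|_\infty\cdot\|f'\|_{L^1}$ and $\|f'\|_{L^1}\le f(a)-f(2\pi)+\int... $ — I will need to be a little careful, but the point is that for an element of $\cR$ the $W^2(\varphi)$ norm is controlled by the $W^\infty(\varphi)$-times-$\infty$ product plus the $L^2$ norm, so the normalized class in $\gamma_\varphi$ sits inside a fixed multiple of the $W^2(\varphi)$ unit ball.

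For the reverse inequality $\gamma_\varphi(a)\gtrsim K_\varphi(a,a)$, the idea is to exhibit an explicit test function $f\in W^\infty(\varphi)\cap\cR$, peaking at $a$, whose value $f(a)^2$ is $\gtrsim K_\varphi(a,a)=1+\int_a^{2\pi}dx/\varphi(x)$ after normalizing. The natural candidate, mimicking the reproducing kernel $L_\varphi$ of $W_0^2(\varphi)$, is (a regularized version of) $f(t)=\int_{\max(t,a)}^{2\pi}dx/\varphi(x)$, i.e. $f$ is constant $=\Lambda:=\int_a^{2\pi}dx/\varphi(x)$ on $[0,a]$ and equals $\int_t^{2\pi}dx/\varphi(x)$ on $[a,2\pi]$. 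Then $f'(t)=-1/\varphi(t)$ on $(a,2\pi)$ and $0$ on $(0,a)$, so $\|f'\varphi\|_\infty\le 1$, $\|f\|_\infty=\Lambda$, $f(a)=\Lambda$, and $\|f\|_2^2\le 2\pi\Lambda^2$; dividing by $\sqrt{\|f'\varphi\|_\infty\|f\|_\infty+\|f\|_2^2}\lesssim\sqrt{\Lambda+\Lambda^2}$ gives a normalized value $f(a)^2/(\Lambda+\Lambda^2)\gtrsim\min(1,\Lambda)\gtrsim 1$ when $\Lambda\gtrsim 1$, and $\gtrsim\Lambda$ when $\Lambda\lesssim 1$ — in all cases $\gtrsim 1+\Lambda$ after adding the trivial constant-function contribution, matching $K_\varphi(a,a)$. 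The main obstacle here is regularity: the function $f$ above has a corner at $a$ and need not be convex, nor need $t^2|f'(t)|=t^2/\varphi(t)$ be increasing beyond the hypothesis, nor need $f(2t)\le 2f(t)$ hold. This is precisely where the hypotheses "$t^2/\varphi(t)$ is increasing" and "$\varphi(t)\ge t^2$" enter. I would smooth the corner (replace the break at $a$ by a short convex transition on, say, $[a/2,a]$, or simply use $f(t)=\int_t^{2\pi}dx/(\varphi(x)+ \text{something})$ truncated) so that $f\in C^1$, is decreasing, and $t^2|f'(t)|$ is increasing: since $f'(t)=-1/\varphi(t)$ for $t>a$, we have $t^2|f'(t)|=t^2/\varphi(t)$ which is increasing by hypothesis, and on the transition region one interpolates monotonically; convexity of $f$ amounts to $|f'|=1/\varphi$ being decreasing, i.e. $\varphi$ increasing, which is given (\,$\varphi$ nondecreasing\,). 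The doubling condition $f(2t)\le 2f(t)$ and $\varphi(t)\ge t^2$ together control the low end: $f(t)-f(2t)=\int_t^{2t}dx/\varphi(x)\le\int_t^{2t}dx/x^2\le 1/t$, which one must compare with $f(t)$; this forces a lower bound on $f$ near $0$, automatically satisfied because $f\ge\Lambda$ there — wait, $f$ is \emph{constant} $=\Lambda$ on $[0,a]$ so $f(2t)=f(t)$ there trivially, and for $t\in[a,\pi]$ one uses $\varphi\ge t^2$ to get $f(t)-f(2t)\le 1/t$ while $f(t)\ge f(\pi)$; the doubling then reduces to an absolute inequality once $a<1/2$, which is exactly the stated hypothesis.

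So the overall structure is: (1) reduce $\gamma_\varphi(a)$ from above to $\sup\{f(a)^2:\|f\|_{W^2(\varphi)}^2\le C\}\asymp K_\varphi(a,a)$ by showing the $W^2(\varphi)$-norm is controlled on the class $\cR$ by the normalization defining $\gamma_\varphi$; (2) for the lower bound, construct an explicit smoothed analogue of the reproducing kernel $L_\varphi(\cdot,a)$ lying in $W^\infty(\varphi)\cap\cR$, verify the four regularity conditions using $t^2/\varphi(t)\nearrow$, $\varphi\ge t^2$, and $a<1/2$, compute its norm and its value at $a$, and normalize. The main obstacle, and the only genuinely delicate part, is step (2): verifying that the smoothed test function actually lies in $\cR$ — in particular the simultaneous monotonicity of $t^2|f'(t)|$ and convexity/doubling across the transition region — which is where all three hypotheses of the proposition get used. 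Everything else is a routine Cauchy–Schwarz/Littlewood–Paley-style estimate already illustrated in the computation of $K_\varphi(a,a)$ above.
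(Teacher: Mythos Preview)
Your upper bound is fine once you stop second-guessing yourself: for $f\in\cR$ decreasing with $\|f\|_\infty<\infty$, one has $\int_0^{2\pi}|f'|=f(0^+)-f(2\pi)\le\|f\|_\infty$, hence $\int|f'|^2\varphi\le\|f'\varphi\|_\infty\int|f'|\le\|f'\varphi\|_\infty\|f\|_\infty$, so $\|f\|_{W^2(\varphi)}^2\le\|f\|_{W^\infty(\varphi)}\|f\|_\infty+\|f\|_2^2$. This is exactly the paper's one-line argument.

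The lower bound has a genuine gap. Your bound $\|f\|_2^2\le 2\pi\Lambda^2$ is too crude: with it the normalized value is $\Lambda^2/(\Lambda+\Lambda^2)=\Lambda/(1+\Lambda)$, which is only $\asymp 1$ when $\Lambda$ is large, and the ``constant-function contribution'' you invoke at the end also gives only $\gtrsim 1$. So your argument yields $\gamma_\varphi(a)\gtrsim 1$, not $\gtrsim 1+\Lambda\asymp K_\varphi(a,a)$. What is needed---and what the paper actually proves---is the sharper estimate $\|f\|_2^2\lesssim\|f\|_\infty\asymp\Lambda$. This is where the hypothesis $\varphi(t)\ge t^2$ is really used (not, as you suggest, merely for the doubling condition): writing $\|f\|_2^2$ as a triple integral $\int_0^{2\pi}\int_{u\ge t}\int_{v\ge t}\frac{du\,dv\,dt}{\varphi(u)\varphi(v)}$, one replaces $1/\varphi(v)$ by $1/v^2$ on the inner integral to get $\int_u^{2\pi}dv/v^2\le 1/u$, and after Fubini the remaining integral collapses to $\int du/\varphi(u)=\Lambda$. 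With $\|f\|_2^2\lesssim\Lambda$ the denominator becomes $\asymp\Lambda$ and the ratio is $\asymp\Lambda$, as required.

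A secondary remark: the paper sidesteps your corner-smoothing entirely by taking $f(x)=1+\int_{2\pi(x+a)/(2\pi+a)}^{2\pi}ds/\varphi(s)$, an affine reparametrization that is already $C^1$ on $[0,2\pi]$; membership in $\cR$ then follows directly from $t^2/\varphi(t)$ increasing and $\varphi$ nondecreasing. Your smoothed version would also work, but the regularity verification is not the heart of the matter---the $L^2$ estimate is.
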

\begin{proof}
Since $\|f\|_{W^2(\varphi )}^2\leq  \|f\|_{W^{\infty}(\varphi )}\|f\|_{\infty}$, then $\gamma_\varphi(a) \leq K_{\varphi}(a,a)$. 

Conversely, let $f=f_0+1$, where
$$
f_0(x) = \displaystyle \int _{2\pi \frac{x+a}{2\pi+a}}^{2\pi}\frac{ds}{\varphi(s)}, \qquad 0<x\leq 2\pi.
$$
Clearly $f\in \cR $. Since $\varphi(2t)\asymp \varphi(t)$, we have 
\begin{eqnarray*}
\|f_0\|_2^2&= &  \int _0^{2\pi} f_0(t)^2dt\\
&\lesssim &\int _0^\pi  \int _{u=t+a} ^{2\pi} \int _{v=t+a}^ {2\pi}  \frac{1}{\varphi (u)\varphi (v)}dudvdt\\
&\lesssim &\int _{t=0}^a  \int _{u=a} ^{2\pi} \int _{v=a}^ {2\pi}  \frac{1}{\varphi (u)\varphi (v)}dudvdt+ 
\int _{t=a}^\pi  \int _{u=t} ^{2\pi} \int _{v=t}^ {2\pi}  \frac{1}{\varphi (u)\varphi (v)}dudvdt.\\
&\lesssim &\int _{u=a} ^{2\pi} \int _{v=a}^ {2\pi}  \frac{a}{\varphi (u)v^2}dudv+ 
 \int _{u=a} ^{2\pi} \int _{v=u}^ {2\pi}  \frac{u}{\varphi (u)v^2}dudv.\\
& \lesssim & \| f_0\| _{\infty}.\\
\end{eqnarray*}
Then we obtain
$$K_{\varphi}(a,a) \lesssim 
\frac{f(a)^2}{\|f\|_{W^{\infty}(\varphi )} \| f\|_{\infty}+\|f\|_2^2} \leq \gamma_\varphi (a). 
$$
And the proof is complete.
\end{proof}

\subsubsection{Proof of the lower estimates}  

Let $z=r \in ]1/2,1[$ and let $w\in \cR$. We consider the outer function $f_{w}$ given by
$$|f_{w}(e^{it})|=w (|t|), \qquad  \text{  a.e on } [-\pi,\pi].$$ 

  
   Let $\varphi (x)= F_\mu (x)+ x^2$. By \eqref{KernelO}, Proposition \ref{estimationharmo} and  Theorem \ref{coronorm} we have
\begin{eqnarray*}
k^{\mu}(r,r) &\gtrsim &\frac{|f_{w}(r)|^2}{\|f_{w}\|_{\cD (\mu)}^2}\\
&\gtrsim & \frac{w ^2(1-r) }{\|w \| _{W^{\infty }(F_{\mu})}\|w \| _{\infty}+ \|w \|_2^2}\\
& \gtrsim &  \frac{w  ^2(1-r) }{\|w \| _{W^{\infty }(\varphi)}\|w \| _{\infty}+ \|w \|_2^2}.
\end{eqnarray*}
This implies that
$$k^{\mu}(r,r) \gtrsim  \gamma_{\varphi} (1-r).$$
By Proposition \ref {sobolev}, we obtain the result.

\section{Capacity}
Let $\mu$ be a positive finite measure on $\TT$ and let $c_\mu$ be the capacity given by \eqref{cmucap}
 
The capacity $c_\mu$ is a Choquet capacity \cite{Ca,G}  and so for every borelian set of $\TT$ we have
$$c_\mu(E)=\sup\{c_\mu(K)\text{ : }K\text{ compact }, K\subset E\}.$$
Note that $c_\mu$ satisfies the weak-type inequality. Namely: 
$$c_\mu(\{\zeta\in\TT\text{ : } |f(\zeta)|\geq t  \;\;\;\;\text{Ê$c_\mu$-q.e.}\})\leq \frac{\|f\|^{2}_{\mu}}{t^2},\qquad f\in \cD^{h}(\mu).$$ 
As consequence of this inequality we  have  the following properties.
\begin{prop}The following properties are satisfied
\begin{itemize}
\item Let $E\subset \TT$ be a Borel set and let $\cM_\mu(E):=\{f\in \cD(\mu) \text{ : } g|E=0 \text{ \;  } c_\mu\text{-q.e} \}.$ Then  the set $\cM_\mu(E)$ is closed in $\cD(\mu)$.\\
\item If $f\in \cD(\mu)$ is cyclic for $\cD(\mu)$ then $f$ is outer function and $c_\mu({Z_\TT}(f))=0$.\\
\item Every function $f\in   \cD(\mu)$ has non-tangential limits $c_\mu $-q.e on $\TT$, more precisely  
The radial limit $\displaystyle \lim_{r\to 1-}f(r\zeta)$ exists and is finite for every $f\in \cD^h(\mu)$ if and only if 
$c_\mu(\zeta)>0$.
\end{itemize}

\end{prop}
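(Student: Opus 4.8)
The three items all reduce to the weak-type inequality quoted above together with countable subadditivity of $c_\mu$, and the plan is to isolate first the following principle, which I use repeatedly: if $f_n\to f$ in $\cD^h(\mu)$, then after passing to a subsequence one has $f_{n_k}\to f$ pointwise $c_\mu$-q.e. on $\TT$. To get this, choose $n_k$ with $\|f-f_{n_k}\|_\mu^2\le 4^{-k}$; the weak-type inequality gives $c_\mu(A_k)\le 4^{-k}$ for $A_k:=\{\zeta:|f(\zeta)-f_{n_k}(\zeta)|>2^{-k}\}$, so $\sum_kc_\mu(A_k)<\infty$, hence $c_\mu(\limsup_kA_k)=0$ by Borel--Cantelli, and off $\limsup_kA_k$ the subsequence converges to $f$. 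This presupposes that elements of $\cD^h(\mu)$ have $c_\mu$-q.e.\ defined boundary values, a point I address in the third paragraph.

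Granting the principle, the closedness of $\cM_\mu(E)$ is immediate: if $f_n\to f$ in $\cD(\mu)$ with each $f_n$ vanishing $c_\mu$-q.e.\ on $E$, I pick $f_{n_k}\to f$ $c_\mu$-q.e., and since a countable union of $c_\mu$-null sets is $c_\mu$-null, $f=0$ $c_\mu$-q.e.\ on $E$. For the statement on cyclic vectors I argue in two steps. First, if $f=I_fO_f$ had a non-constant inner factor $I_f$, then $\Span\{z^nf:n\ge0\}\subset I_fH^2$, and since $\|\cdot\|_{H^2}\le\|\cdot\|_\mu$ the $\cD(\mu)$-closure of this span is still contained in the $H^2$-closed subspace $I_fH^2$, which omits the constant function $1$ — contradicting cyclicity; hence $f$ is outer. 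Second, I choose polynomials $p_n$ with $p_nf\to1$ in $\cD(\mu)$ and extract a subsequence with $p_{n_k}f\to1$ $c_\mu$-q.e.; on ${Z_\TT}(f)$ the ($c_\mu$-q.e.\ defined) radial boundary value of $p_{n_k}f$ equals $p_{n_k}(\zeta)\cdot 0=0$ for $c_\mu$-q.e.\ such $\zeta$, forcing $1=0$ $c_\mu$-q.e.\ on ${Z_\TT}(f)$, i.e.\ $c_\mu({Z_\TT}(f))=0$.

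For the third item, the $c_\mu$-q.e.\ existence of non-tangential limits for $f\in\cD^h(\mu)$ I obtain in the classical way: polynomials are dense in $\cD^h(\mu)$, and combining density with a \emph{capacitary maximal inequality} $c_\mu(\{\zeta:\sup_{0<r<1}|P[g](r\zeta)|>t\})\lesssim\|g\|_\mu^2/t^2$ and the Borel--Cantelli argument, I get $P[f](r\zeta)-p_n(r\zeta)\to0$ uniformly in $r$ off a $c_\mu$-null set; since each $p_n$ is continuous on $\overline{\DD}$, $\lim_{r\to1}P[f](r\zeta)$ then exists $c_\mu$-q.e.\ (equivalently, every $f\in\cD^h(\mu)$ has a $c_\mu$-quasicontinuous representative, available from the general Beurling--Deny theory). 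For the pointwise characterization I fix $\zeta$: by Theorem~\ref{cap arc}, $c_\mu(I_\rho)\asymp 1/k^\mu(\rho\zeta,\rho\zeta)$ where $I_\rho$ is the arc of length $1-\rho$ centred at $\zeta$; letting $\rho\to1$ and using that $c_\mu$ is a Choquet capacity (so $c_\mu(I_\rho)\downarrow c_\mu(\{\zeta\})$) together with the comparability on the diagonal of the reproducing kernels of $\cD(\mu)$ and $\cD^h(\mu)$, this yields that $c_\mu(\{\zeta\})>0$ is equivalent to uniform boundedness of the evaluation functionals $\Lambda_r:f\mapsto P[f](r\zeta)$ on $\cD^h(\mu)$, $0<r<1$. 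If that holds, then for $p_n\to f$ one has $|P[f](r\zeta)-p_n(r\zeta)|\le\|\Lambda_r\|\,\|f-p_n\|_\mu\to0$ uniformly in $r$, so $\lim_{r\to1}P[f](r\zeta)$ exists for every $f$. If it fails, the uniform boundedness principle yields some $f\in\cD^h(\mu)$ with $\sup_r|P[f](r\zeta)|=\infty$, which has no finite radial limit at $\zeta$.

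The one step requiring real work is the capacitary maximal inequality underpinning the $c_\mu$-q.e.\ existence of boundary values — the analogue for general $\mu$ of Beurling's theorem; everything else is bookkeeping with the weak-type inequality, the $H^2$-domination of the norm, and Theorem~\ref{cap arc}. I would either establish this maximal inequality directly (truncating $f$ and controlling the radial maximal function of the Poisson extension by the $\cD^h(\mu)$-norm) or invoke the quasicontinuity of $\cD^h(\mu)$-functions from the theory of Dirichlet forms, which is precisely this statement.
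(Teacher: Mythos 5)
The first thing to say is that the paper does not actually prove this proposition: its ``proof'' is the citation ``See \cite{C, G}'', so the comparison is with the standard arguments of Chac\'on and Guillot rather than with anything in the text. Your outline follows exactly that standard route: weak-type inequality plus Borel--Cantelli to extract $c_\mu$-q.e.\ convergent subsequences (items (1) and (2)), the domination $\|\cdot\|_{H^2}\le\|\cdot\|_\mu$ to show a cyclic vector has no nonconstant inner factor, and Theorem~\ref{cap arc} combined with Choquet continuity along the shrinking arcs and the uniform boundedness principle for the pointwise characterization in item (3). All of that is sound as a skeleton.

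The genuine gap is the one you flag yourself, and it is not ``bookkeeping'': the capacitary maximal inequality, equivalently the fact that every $f\in\cD^h(\mu)$ has a $c_\mu$-quasicontinuous representative whose q.e.\ values coincide with the non-tangential limits of $P[f]$. This is precisely the content of the results of \cite{C} and \cite{G} that the paper is citing; moreover the weak-type inequality as stated in the paper already presupposes q.e.-defined boundary values, so it cannot by itself bootstrap the maximal estimate, and invoking quasicontinuity from the general Dirichlet-form theory \cite{FOT} produces q.e.-defined representatives but not, without further argument, their identification with radial limits of the Poisson extension --- which is exactly what the proposition asserts. Since items (1) and (2) in your write-up also use q.e.-defined boundary values (and item (2) uses q.e.\ existence of non-tangential limits of $f$ on $Z_\TT(f)$), the whole proposal rests on this unproved step. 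Two smaller points you assert without proof: density of trigonometric polynomials in $\cD^h(\mu)$ (true, via the splitting of $\cD^h(\mu)$ into analytic and conjugate-analytic parts coming from Douglas' formula together with density of polynomials in $\cD(\mu)$ \cite{Ri, RS1}), and the comparability of the diagonal kernels of $\cD^h(\mu)$ and $\cD(\mu)$; note that in the direction ``$c_\mu(\{\zeta\})=0$ implies some $f$ has no finite radial limit'' you only need the trivial bound $\|\Lambda_r\|_{\cD^h(\mu)}\ge k^\mu(r\zeta,r\zeta)^{1/2}$, whereas the converse direction genuinely needs an upper bound for harmonic evaluations (e.g.\ by redoing the paper's upper kernel estimate for $P[f]$). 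With the maximal inequality either proved or explicitly cited from \cite{C, G}, and these two facts supplied, your plan does yield the proposition; as written, it is an accurate roadmap rather than a proof.
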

\begin{proof} See \cite{C, G}.
\end{proof}
Now we will give the proof of the estimate of the capacity of arcs.

\subsection*{Proof Theorem \ref{cap arc}}  Suppose that $\zeta=1$. Let $w\in \cR$ and let $f_w$ be the outer function satisfying 
$$|f_{w}(e^{it})|=w (|t|), \qquad  \text{  a.e on } [-\pi,\pi].$$  
It's clear that $w(|I|)\lesssim w(x)$ for $|x|\leq  2|I|$. 
We have
$$\gamma _\varphi (|I|)\lesssim 1/c_\mu(I ).$$
By proposition \ref {sobolev}, we obtain 
$$c_\mu (I)\lesssim 1/k^\mu(\rho, \rho),\ \ (\rho = 1-|I|).$$

For the reverse inequality note that 
$$c_\mu(I)=\inf\{\|f\|^2_\mu\text{ : } f\in C^1, 0\leq f\leq 1\text{ and } f=1 \text{ on } I\}.$$ 
Consider the function $u\in \cD^h(\mu)$ such that $0\leq u\leq 1$ and $u_{|I}=1$. Hence $u\in \cD^h(\mu)\cap C^1$. We have $P[u](1-|I|)\asymp 1$. Let $\rho=1-|I|$, a similar argument, as in the proof of  \eqref{eqkernel1}, gives 
$$P[u](\rho)\lesssim \cD_\mu(u)^{1/2} \sqrt{k^{\mu}(\rho,\rho)} +\|u\|_{L^2(\TT)}.$$
So ${k^{\mu}(\rho,\rho)^{-1/2}}\lesssim  \cD_\mu(u)^{1/2}$, and 
$$c_{\mu}(I)\geq \frac{1}{k^{\mu}(\rho,\rho)}.$$ 
\hfill $\Box$

As an immediate consequence, we obtain 
\begin{coro}\label{cap1} Let $\lambda\in \TT$.
$$c_\mu(\{\lambda\})=0\iff\int_0^1\frac{dx}{(1-x) P[\mu](x\lambda)+(1-x)^2}=\infty.$$
\end{coro}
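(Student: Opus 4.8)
The plan is to deduce Corollary \ref{cap1} directly from Theorem \ref{cap arc} together with Theorem \ref{thkernel}. Theorem \ref{cap arc} tells us that for an arc $I$ of length $|I| = 1-\rho$ centered at $\lambda$, we have $c_\mu(I) \asymp 1/k^\mu(\rho\lambda,\rho\lambda)$ with absolute implied constants. The strategy is therefore to let $|I| \to 0$ (equivalently $\rho \to 1^-$) and relate $c_\mu(\{\lambda\})$ to the limiting behavior of $k^\mu(\rho\lambda,\rho\lambda)$.

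First I would observe that since capacity is monotone and $\{\lambda\} = \bigcap_n I_n$ for a decreasing sequence of arcs $I_n$ centered at $\lambda$ with $|I_n| \to 0$, and since $c_\mu$ is an outer/Choquet capacity with $c_\mu(\{\lambda\}) = \inf\{c_\mu(U) : U \text{ open}, \lambda \in U\}$, we get $c_\mu(\{\lambda\}) = \lim_{n} c_\mu(I_n) = \inf_{\rho < 1} c_\mu(I_\rho)$, where $I_\rho$ is the arc of length $1-\rho$. Combined with Theorem \ref{cap arc}, this yields
$$c_\mu(\{\lambda\}) \asymp \inf_{\rho<1} \frac{1}{k^\mu(\rho\lambda,\rho\lambda)} = \frac{1}{\lim_{\rho\to 1^-} k^\mu(\rho\lambda,\rho\lambda)},$$
the last equality because $\rho \mapsto k^\mu(\rho\lambda,\rho\lambda)$ is nondecreasing (its reciprocal, up to constants, equals the decreasing quantity $c_\mu(I_\rho)$, or alternatively one invokes the monotonicity of the kernel on rays). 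Hence $c_\mu(\{\lambda\}) = 0$ if and only if $k^\mu(\rho\lambda,\rho\lambda) \to \infty$ as $\rho \to 1^-$.

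Next I would insert the estimate from Theorem \ref{thkernel}, namely $k^\mu(\rho\lambda,\rho\lambda) \asymp 1 + \int_0^\rho \frac{dr}{(1-r)P[\mu](r\lambda) + (1-r)^2}$. Letting $\rho \to 1^-$, this diverges if and only if the full integral $\int_0^1 \frac{dr}{(1-r)P[\mu](r\lambda) + (1-r)^2}$ diverges (the integrand is positive, so the partial integrals increase to the total). Putting the two equivalences together gives exactly
$$c_\mu(\{\lambda\}) = 0 \iff \int_0^1 \frac{dx}{(1-x)P[\mu](x\lambda) + (1-x)^2} = \infty,$$
which is the claimed statement (this is also the content of \eqref{cappoint}).

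The only genuinely delicate point is justifying the interchange of the infimum over arcs with the limit, i.e. that $c_\mu(\{\lambda\}) = \lim_{\rho \to 1^-} c_\mu(I_\rho)$ rather than merely $c_\mu(\{\lambda\}) \le \liminf$. This follows from the regularity of the Choquet capacity $c_\mu$ (stated at the beginning of Section 5: for Borel sets $c_\mu(E) = \sup\{c_\mu(K) : K \subset E \text{ compact}\}$, together with the outer regularity built into the definition), since a decreasing intersection of compact arcs shrinking to $\{\lambda\}$ has capacities decreasing to $c_\mu(\{\lambda\})$ by upper semicontinuity of $c_\mu$ on compacts. Everything else is a routine passage to the limit in the two asymptotic estimates already established, so I expect no further obstacles.
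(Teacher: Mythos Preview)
Your proposal is correct and follows exactly the route the paper intends: the corollary is stated in the paper as ``an immediate consequence'' of Theorem~\ref{cap arc} combined with Theorem~\ref{thkernel}, and you have simply written out the details (shrinking arcs, outer regularity of $c_\mu$, and the monotone passage to the limit in the kernel estimate). There is nothing to add.
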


Now we will give some sufficient conditions on a closed subset of $\TT$ to be $\mu$--polar. 
Let $E$ be a closed subset of $\TT$.  We define $n_E(\varepsilon)$, the {\it $\varepsilon$--coverning number of $E$},  to be the smallest  number of the closed arcs of length $2\varepsilon$ that cover $E$.   Note that 
$$\varepsilon n_E(\varepsilon)\leq |E_\varepsilon|\leq 4\varepsilon n_E(\varepsilon),\qquad 0<\varepsilon\leq \pi.$$

Let 
 $$\kappa_\mu(r)=\inf\{ k^{\mu}(r\zeta,r\zeta)\text{ : } \zeta\in \supp \mu\}.$$
   It 's easy to see that $\kappa_\mu$ is unbounded  if and only if,  for each $\zeta\in \TT$, we have $c_\mu(\zeta)= 0$. In this case 
one can prove easily, by the sub-additivity property of  capacity, that 

\begin{coro}
Let $E$ be a closed subset of $\TT$ such that $ \displaystyle \lim _{r \to 1^-}\kappa_\mu(r) = \infty$. If 
$$n_E(\varepsilon)=o(\kappa_\mu(1-\varepsilon)),\qquad \varepsilon\to 0,$$
 then $c_\mu(E)=0$.
\end{coro}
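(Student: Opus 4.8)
The plan is to deduce this corollary from Theorem~\ref{cap arc} (the arc-capacity estimate) together with the subadditivity of the Choquet capacity $c_\mu$. First I would fix $\varepsilon>0$ and choose a minimal family of closed arcs $I_1,\dots,I_{n_E(\varepsilon)}$, each of length $2\varepsilon$, whose union covers $E$; this is possible by the very definition of the covering number $n_E(\varepsilon)$. By countable (here finite) subadditivity of $c_\mu$ we get
\begin{equation}\label{subadd-cor}
c_\mu(E)\leq \sum_{j=1}^{n_E(\varepsilon)} c_\mu(I_j).
\end{equation}
Each arc $I_j$ has length $2\varepsilon$, so its midpoint $\zeta_j$ lies on $\TT$ and, applying Theorem~\ref{cap arc} with $1-\rho=2\varepsilon$ (the mild discrepancy between the normalisation $|I|=1-\rho$ there and $|I_j|=2\varepsilon$ here only costs an absolute constant, since $k^\mu(r\zeta,r\zeta)$ is comparable for $r$ ranging over a fixed-proportion subinterval — this is essentially the content of Lemma~\ref{lemFmu1} and the monotonicity built into $F_\mu$), we obtain $c_\mu(I_j)\asymp 1/k^\mu(\rho_\varepsilon\zeta_j,\rho_\varepsilon\zeta_j)$ where $\rho_\varepsilon\asymp 1-\varepsilon$.

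Next I would bound each term from above using the definition of $\kappa_\mu$. Since the midpoint $\zeta_j$ of $I_j$ need not lie in $\supp\mu$, I cannot directly invoke $\kappa_\mu(1-\varepsilon)\le k^\mu(\rho_\varepsilon\zeta_j,\rho_\varepsilon\zeta_j)$; instead I would note that an arc $I_j$ contributing to the minimal cover must meet $E\subset\supp\mu$ (otherwise it could be discarded, contradicting minimality), pick a point $\xi_j\in I_j\cap\supp\mu$, and observe that $k^\mu(\rho_\varepsilon\xi_j,\rho_\varepsilon\xi_j)\asymp k^\mu(\rho_\varepsilon\zeta_j,\rho_\varepsilon\zeta_j)$ because the two radial points are within distance $O(\varepsilon)=O(1-\rho_\varepsilon)$ of each other and $k^\mu(w,w)$ varies by at most an absolute factor on such a scale (again via the formula of Theorem~\ref{thkernel} and the regularity of $F_\mu$). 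Therefore $c_\mu(I_j)\lesssim 1/k^\mu(\rho_\varepsilon\xi_j,\rho_\varepsilon\xi_j)\le 1/\kappa_\mu(1-\varepsilon)$ for every $j$, and \eqref{subadd-cor} yields
\begin{equation}\label{cor-final}
c_\mu(E)\lesssim \frac{n_E(\varepsilon)}{\kappa_\mu(1-\varepsilon)}.
\end{equation}

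Finally, letting $\varepsilon\to0^+$: the hypothesis $n_E(\varepsilon)=o(\kappa_\mu(1-\varepsilon))$ forces the right-hand side of \eqref{cor-final} to tend to $0$, whence $c_\mu(E)=0$. (The assumption $\lim_{r\to1^-}\kappa_\mu(r)=\infty$ is what makes the little-$o$ condition meaningful, and by the remark preceding the statement it is equivalent to $c_\mu(\{\zeta\})=0$ for every $\zeta\in\TT$, so it costs nothing.) The only genuinely delicate point is the comparison step: justifying that replacing an arc's midpoint by a nearby point of $\supp\mu$, and replacing the exact radius $1-|I_j|$ by $1-\varepsilon$, changes $k^\mu$ only by absolute constants. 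I expect this to be the main obstacle, though it is routine given Theorem~\ref{thkernel}: the integrand $1/\big((1-r)P[\mu](rz/|z|)+(1-r)^2\big)$ is, after the change of variable $x=1-r$, governed by $F_\mu(x)+x^2$, and $F_\mu$ together with the Poisson kernel are Lipschitz-type regular on the scale $x\asymp\varepsilon$, so the integrals defining $k^\mu$ at the two competing base points are comparable. Everything else is just finite subadditivity and a limit.
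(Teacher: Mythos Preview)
Your approach—cover $E$ by $n_E(\varepsilon)$ arcs of length $2\varepsilon$, apply subadditivity of $c_\mu$, bound each $c_\mu(I_j)$ via Theorem~\ref{cap arc}, and let $\varepsilon\to0$—is precisely the route the paper sketches (the paper's ``proof'' is literally the one clause \emph{one can prove easily, by the sub-additivity property of capacity}).

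There is one genuine slip in your write-up. To compare $k^\mu$ at the midpoint $\zeta_j$ with $\kappa_\mu$, you assert that a minimal covering arc meets $E\subset\supp\mu$ and then pick $\xi_j\in I_j\cap\supp\mu$. The inclusion $E\subset\supp\mu$ is not among the hypotheses, so this step is unjustified. Fortunately the whole detour through $E$ is unnecessary: for \emph{any} $\zeta\in\TT$, take $\xi$ a nearest point of $\supp\mu$ to $\zeta$; then $d(\xi,\eta)\le d(\xi,\zeta)+d(\zeta,\eta)\le 2\,d(\zeta,\eta)$ for every $\eta\in\supp\mu$, whence (writing $F_{\mu,\zeta}$ for the rotated version of $F_\mu$ in \eqref{pfmu}) $F_{\mu,\zeta}(t)\le 4\,F_{\mu,\xi}(t)$ for all $t>0$. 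Via Theorem~\ref{thkernel} and Lemma~\ref{lemFmu1}(1) this gives $k^\mu(r\zeta,r\zeta)\gtrsim k^\mu(r\xi,r\xi)\ge\kappa_\mu(r)$ with an absolute constant. Thus $c_\mu(I_j)\lesssim 1/\kappa_\mu(1-\varepsilon)$ for every covering arc regardless of where its midpoint sits, and the rest of your argument goes through unchanged.
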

Recall that for a positive increasing function $\psi$ on $(0,2\pi)$ such that $\psi (0)=0$, we set 
 $$
\label{concave-convexe}
M_{\psi,E}(s)= \max\Big(\int_{0}^{s}\frac{{\psi(t)}} {t}N_E(t)dt,\; \frac{\psi(s)}{s}|E_s|\Big), \qquad s\in (0,2\pi)
$$
 Now we can state the main result of this section.
\begin{theo}\label{capdirichlet} Let $E$ be closed subset of $\TT$ such that $\rho_{\mu,E}\leq \psi$ where  $\psi$ is $\alpha$--admissible  for some $\alpha < 2$.  If
$$\int_{0}^{\pi}\frac{dt}{M_{\psi,E}(t)}=+\infty,$$
then $c_\mu(E)=0$.
\end{theo}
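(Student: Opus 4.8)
The plan is to follow the same strategy used for the lower bound of the kernel in Theorem~\ref{thkernel}, but with a test function that peaks on the whole set $E$ rather than at a single point. Fix $\varepsilon>0$ small and recall $E_\varepsilon=\{\zeta\in\TT:\dist(\zeta,E)\le\varepsilon\}$, which is an open neighbourhood of $E$. By the definition \eqref{cmucap} of $c_\mu$, to show $c_\mu(E)=0$ it suffices to produce, for each $\varepsilon$, a function $\Omega_\varepsilon\in\cD^h(\mu)$ with $\Omega_\varepsilon\ge 0$, $\Omega_\varepsilon\ge 1$ a.e.\ on $E_\varepsilon$, and $\|\Omega_\varepsilon\|_\mu^2\to 0$ as $\varepsilon\to 0$ (after normalising so that $\Omega_\varepsilon=1$ near $E$). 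The natural candidate is $\Omega_\varepsilon(\zeta)=w_\varepsilon(d(\zeta,E))$ for a convex decreasing profile $w_\varepsilon$ with $w_\varepsilon\equiv 1$ on $[0,\varepsilon]$ and $w_\varepsilon$ decaying to $0$ as its argument approaches $\pi$; then Theorem~\ref{thhar} controls $\cD_\mu(\Omega_\varepsilon)$ in terms of $\|w_\varepsilon' M_{\psi,E}\|_\infty\|w_\varepsilon\|_\infty$, and $\|\Omega_\varepsilon\|_{L^2(\TT)}^2$ is controlled by $\|w_\varepsilon\|_\infty^2$ together with a term involving $|E_s|$.

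The core of the argument is to recognise that choosing the optimal $w_\varepsilon$ is exactly the problem of computing the reproducing kernel $K_\varphi$ of the weighted Sobolev space $W^2(\varphi)$ with weight $\varphi=M_{\psi,E}$, restricted to the class $\cR$ of regular profiles, via the quantity $\gamma_\varphi$. Concretely, by Theorem~\ref{thhar} and the estimate on $\|\Omega_\varepsilon\|_2$ one gets
$$
\frac{1}{c_\mu(E_\varepsilon)}\;\gtrsim\;\gamma_{M_{\psi,E}}(\varepsilon),
$$
after the usual bookkeeping that $w_\varepsilon(\varepsilon)\asymp w_\varepsilon(x)$ for $x\le 2\varepsilon$ (so that the peak value on $E_\varepsilon$ is comparable to the value at the scale $\varepsilon$), exactly as in the proof of Theorem~\ref{cap arc}. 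One must first check that $\varphi=M_{\psi,E}$ satisfies the hypotheses of Proposition~\ref{sobolev}, i.e.\ $t^2/\varphi(t)$ increasing and $\varphi(t)\ge t^2$: since $\psi$ is $\alpha$-admissible with $\alpha<2$, the function $\psi(t)/t$ is comparable to a power $t^{\beta}$ with $\beta$ between $-1$ and $1$, and a short computation with the two expressions defining $M_{\psi,E}$ (using $N_E(t)\le 2/t\cdot\int_0^tN_E$ type bounds and $\psi(t)/t^\alpha$ decreasing) gives both required monotonicities, with absolute or $\alpha$-dependent constants. Then Proposition~\ref{sobolev} identifies $\gamma_{M_{\psi,E}}(\varepsilon)\asymp K_{M_{\psi,E}}(\varepsilon,\varepsilon)\asymp 1+\int_\varepsilon^{2\pi}dt/M_{\psi,E}(t)$.

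Combining these, $c_\mu(E)\le c_\mu(E_\varepsilon)\lesssim\bigl(1+\int_\varepsilon^{2\pi}dt/M_{\psi,E}(t)\bigr)^{-1}$ for every $\varepsilon>0$, and the hypothesis $\int_0^\pi dt/M_{\psi,E}(t)=+\infty$ forces the right-hand side to tend to $0$, whence $c_\mu(E)=0$. The main obstacle I expect is twofold: first, the verification that $M_{\psi,E}$ meets the structural requirements of Proposition~\ref{sobolev} and of Theorem~\ref{thhar} simultaneously (this is where $\alpha$-admissibility with $\alpha<2$ is essential, and it requires handling the "max" in the definition of $M_{\psi,E}$ carefully, since the maximum of two nice functions need not inherit convexity or the exact power-type bounds); and second, making rigorous the passage from the extremal profile $w_\varepsilon$, which a priori lives in $W^2(\varphi)$, to an actual competitor in $\cD^h(\mu)$ normalised to be $\ge 1$ on the open set $E_\varepsilon$ — one needs $w_\varepsilon\in\cR$ (so that Theorem~\ref{thhar} applies) and $w_\varepsilon$ bounded below by a fixed constant on $[0,\varepsilon]$, which is precisely why $\gamma_\varphi$ rather than $K_\varphi$ appears and why Proposition~\ref{sobolev} is invoked. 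Once these points are settled the conclusion is immediate.
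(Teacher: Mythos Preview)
Your proposal is correct and follows essentially the same route as the paper: build competitors $\Omega=w(d(\cdot,E))$, invoke Theorem~\ref{thhar} to get $\cD_\mu(\Omega)\lesssim \|w'M_{\psi,E}\|_\infty\|w\|_\infty$, optimise over $w\in\cR$ to obtain $1/c_\mu(E)\gtrsim \gamma_{M_{\psi,E}}(a)$, apply Proposition~\ref{sobolev} to identify this with $K_{M_{\psi,E}}(a,a)\asymp 1+\int_a^{2\pi}dt/M_{\psi,E}(t)$, and let $a\to 0$. The paper's proof is in fact terser than yours---it does not spell out the verification that $M_{\psi,E}$ meets the hypotheses of Proposition~\ref{sobolev}, which you rightly flag as the only point requiring care.
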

\begin{proof}
Note that   $ M_{{\psi},E}$  is given by \eqref{concave-convexe}, so 
$M_{\psi,E}(s)$ is increasing. 
 Let $a>0$. By the definition of capacity and using Theorem \ref{thhar}   and  Proposition \ref{sobolev} we have 
$$
 K_{M_{\psi,E}} (a,a)\asymp \gamma^2 _{M_{\psi,E}}(a)\lesssim \frac{1}{c_\mu (E)}.
$$
When $a$ goes to zero, we get
$$\int_{0}^{\pi}\frac{dt}{M_{\psi,E}(t)} \lesssim \frac{1}{c_\mu (E)}.$$
And the proof is complete.
\end{proof}

 \subsection*{Remarks} Now we give some examples: \\
 
{\bf(i)} Let $E $ be a closed subset of $\TT$ and let $\widetilde{M}_{\mu, E}(t)= 
\int _0^t ({\rho _{\mu}(s)}N_E(s)/s)ds$.
 If
$$\int_{0}^{\pi}\frac{dt}{\widetilde{M}_{\mu, E}(t)}= \infty$$
then $c_\mu(E)=0$.\\

{\bf(ii)} If $\mu =m$ is the Lebesgue measure, then $c_\mu $ is comparable to  the logarithmic capacity and $\widetilde{M}_{\mu, E}(s) = |E_s|$. And theorem  \ref {capdirichlet} says that if $\displaystyle \int _0dt/|E_t| $ diverges then $c(E)=0$. This result is du to Carleson \cite[Theorem 2, p.30]{Ca}.\\

{\bf(iii)} Let $K$ be a closed subset of $\TT$ such that $\rho_{\mu,K}(s)= O(s^{1+\beta})$ for some $0<\beta $. If $\beta <1$, then every subset of $K$ with Hausdorff dimension less than $\beta$ is $\mu$-polar. If $\beta >1$, then every subset $E$ of $K$ with $|E|=0$ is $\mu$-polar.\\ 
The following measures $d\mu (\zeta)= d(\zeta, K)^\beta dm(\zeta)$ provide such examples.

\end{document}